\theoremstyle{plain}
\newtheorem{theorem}{Theorem}[section]
\newtheorem{lemma}[theorem]{Lemma}
\newtheorem{corollary}[theorem]{Corollary}
\newtheorem{proposition}[theorem]{Proposition}
\newtheorem{observation}[theorem]{Observation}
\newtheorem{question}[theorem]{Question}
\theoremstyle{definition}
\newtheorem*{rem}{Remark}
\newcommand\cref[1]{Corollary~\ref{cor:#1}}
\title{Adaptive Majority Problems\\ for Restricted Query Graphs\\ and for Weighted Sets\thanks{Research supported by the Lend\"ulet program of the Hungarian Academy of Sciences (MTA), under grant number LP2017-19/2017, the J\'anos Bolyai Research Fellowship of the Hungarian Academy of Sciences, the National Research, Development and Innovation Office -- NKFIH under the grants K 116769, K 124171, K 132696, SNN 129364, KH 130371 and FK 132060, the National Research, Development and Innovation Fund (TUDFO / 51757 / 2019-ITM, Thematic Excellence Program), by the European Union, co-financed by the European Social Fund, by the grant of Russian Government N 075-15-2019-1926, by the New National Excellence Program under the grant number \'UNKP-19-4-BME-287 and  by the  BME-Artificial Intelligence FIKP grant of EMMI (BME FIKP-MI/SC).}}
\begin{document}

\author{G\'abor Dam\'asdi$^{c}$,
D\'aniel Gerbner$^{a}$,
Gyula O.H. Katona$^{a}$,
Bal\'azs Keszegh$^{a,c}$,\\
D\'aniel Lenger$^{c}$,
Abhishek Methuku$^{b}$,
D\'aniel T. Nagy$^{a}$,\\ D\"om\"ot\"or P\'alv\"olgyi$^{c}$,
Bal\'azs Patk\'os$^{a,d}$,
M\'at\'e Vizer$^{a}$,
G\'abor Wiener$^{e}$
\\
\small $^a$ Alfr\'ed R\'enyi Institute of Mathematics, Hungarian Academy of Sciences\\
\small $^b$ \'Ecole Polytechnique F\'ed\'erale de Lausanne\\
\small $^c$ MTA-ELTE Lend\"ulet Combinatorial Geometry Research Group,  E\"otv\"os Lor\'and University\\
\small $^d$  Lab. of Combinatorial and Geometric Structures, Moscow Inst. of Physics and Technology \\
\small $^e$ Dept. of Computer Science and Information Theory, Budapest Univ. of Technology and Economics
}
\maketitle

\begin{abstract}
Suppose that the vertices of a graph $G$ 
are colored with two colors in an unknown way.
The color that occurs on more than half of the vertices is called the \emph{majority color} (if it exists), and any vertex of this color is called a \emph{majority vertex}.
We study the problem of finding a majority vertex (or show that none exists), if we can query edges to learn whether their endpoints have the same or different colors.
Denote the least number of queries needed in the worst case by $m(G)$.
It was shown by Saks and Werman that $m(K_n)=n-b(n)$, where $b(n)$ is the number of 1's in the binary representation of $n$.

In this paper we initiate the study of the problem for general graphs.
The obvious bounds for a connected graph $G$ on  $n$ vertices are $n-b(n)\le m(G)\le n-1$.
We show that for any tree $T$ on an even number of vertices we have $m(T)=n-1$, and that for any tree $T$ on an odd number of vertices, we have $n-65\le m(T)\le n-2$. 
Our proof uses results about the weighted version of the problem for $K_n$, which may be of independent interest.
We also exhibit a sequence $G_n$ of graphs with $m(G_n)=n-b(n)$ such that $G_n$ has $O(nb(n))$ edges and $n$ vertices.
\end{abstract}

\section{Introduction}

Given a set $X$ of $n$ balls and an unknown coloring of $X$ with a fixed set of colors, we say that a ball $x\in X$ is a \emph{majority ball} if its color class contains more than $|X|/2$ balls.
The \emph{majority problem} is to find a majority ball (or show that none exists).
In the basic model of majority problems, one is allowed to ask queries of pairs $(x,y)$ of balls in $X$ to which the answer tells whether the color of $x$ and $y$ is the same or not, which we denote by SAME and DIFF, respectively.
The answers are given by an \emph{adversary} whose goal is to force us to use as many questions as possible.
It is an easy exercise to see that if the number of colors is two, then in a non-adaptive search (all queries must be asked at once) the minimum number of queries to solve the majority problem is $n-1$, unless $n$ is odd, in which case $n-2$ queries suffice.
On the other hand, Fisher and Salzberg \cite{FS1982} proved that if we do not have any restriction on the number of colors, $\lceil 3n/2 \rceil - 2$ queries are necessary and sufficient to solve the majority problem adaptively (the answer to a query is known before asking the next one).
If the number of colors is two, then Saks and Werman \cite{SW1991} proved that the minimum number of queries needed in an adaptive search is $n-b(n)$, where $b(n)$ is the number of $1$'s in the binary form of $n$ (we note that there are simpler proofs of this result, see \cite{A2004,HayKutMel02,W2002}).
There are several other generalizations of the problem, which include more colors \cite{adm,cgp,GerKatPal13,GerLenViz17}, larger queries \cite{bor,cgp,dkw,eh,GerKesPal17,GerLenViz17,GerViz18}, non-adaptive \cite{A2004,cgmy,GerKatPal13} and weighted versions \cite{GerKatPal13}. 

In the present paper we study the adaptive majority problem for two colors when we restrict the set of pairs that can be queried to the edges of some graph $G$ on $n$ vertices.
The original majority problem, where we can ask any pair, corresponds to $G=K_n$.
To distinguish between the version when we are restricted to the edges of a graph, and the original, unrestricted version, we call the colored objects \emph{vertices} and \emph{balls}, respectively.
To the best of our knowledge, the only similar result is \cite{dp}, where it was shown that if the size of the two color classes differs only by a constant, then $\Omega(n)$ queries might be needed on any graph even for a randomized algorithm to solve the majority problem, but if the sizes differ by $\Omega(n)$, then randomized algorithms can do better for several graph classes.
In this paper, however, we only deal with the worst-case performance of deterministic algorithms, which allows us to obtain better bounds.

Notice that it is possible to solve the majority problem (with any number of queries) if and only if $G$ is connected when $n$ is even, and if and only if $G$ has at most two components when $n$ is odd.
For any such graph, denote the minimum number of queries needed to solve the majority problem in the worst case by $m(G)$.
Obviously we have $n-b(n) =m(K_n)\le m(G)\le n-1$ (moreover, $m(G)\le n-2$ when $n$ is odd).
Our main results are the following.

\begin{theorem}\label{thmtree}
For every tree $T$ on an even number $n$ of vertices $m(T)=n-1$ and\\
for every tree $T$ on an odd number $n$ of vertices $m(T)\ge n-65$.
\end{theorem}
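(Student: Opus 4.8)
The plan is to translate the whole problem into a combinatorial merging game. Since $T$ is a tree, any set of queried edges is a forest, and---crucially---every newly queried edge joins two \emph{distinct} components of the current queried forest (otherwise that edge together with the already-queried path between its endpoints would be a cycle in $T$). Hence after $k$ queries the forest has exactly $n-k$ components, and the algorithm can stop only once enough components have merged. Encoding a colouring by $c\colon V\to\{\pm1\}$, a query on $uv$ reveals $c(u)c(v)$, so within each component the colouring is determined up to a global sign, and to each component $C$ we attach its \emph{signed size} $d(C)=\bigl|\sum_{v\in C}c(v)\bigr|$. The state of the game is the multiset $\{d_1,\dots,d_t\}$ of signed sizes, initialised to $n$ copies of $1$; a query merging components of signed sizes $a$ and $b$ lets the adversary (by answering SAME or DIFF, i.e.\ by fixing their relative orientation) replace $a,b$ by either $a+b$ or $|a-b|$.

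First I would pin down exactly when the algorithm may stop. Writing $\Sigma=\sum_i d_i$ and $d_{\max}=\max_i d_i$, the overall signed sum ranges over $\{\sum_i\eps_i d_i:\eps\in\{\pm1\}^t\}$, and a short argument shows the majority is determined (either one fixed vertex is a majority in every consistent colouring, or every consistent colouring is balanced) \emph{iff} all $d_i=0$ or some component \emph{dominates}, i.e.\ $d_{\max}>\Sigma-d_{\max}$. The nontrivial direction is that when $2d_{\max}\le\Sigma$ and not all $d_i$ vanish, setting $\eps_{i_0}=+1$ for a claimed majority vertex's component and all other signs to $-1$ gives signed sum $2d_{i_0}-\Sigma\le 2d_{\max}-\Sigma\le 0$, so no vertex is a guaranteed majority. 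Note $\Sigma\equiv n\pmod 2$ throughout, which already separates the two regimes: for odd $n$ the sum is always odd, so it never vanishes and \emph{domination is the only way to stop}, whereas for even $n$ the all-zero (no-majority) state is also available.

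For even $n$ the upper bound $m(T)\le n-1$ is immediate (query every edge), so the task is the matching lower bound, i.e.\ the adversary must avoid every determined state while $t\ge2$. Here I would have the adversary preserve a \emph{balanced signing}: it keeps $0$ among the reachable signed sums together with $\Sigma>0$, which by the characterisation keeps the state undetermined. When a merge is requested, a balanced signing assigns the two merged components either equal or opposite relative signs, and answering SAME, resp.\ DIFF, accordingly keeps a balanced signing alive. The one obstacle is the degenerate collapse in which the multiset has become $\{a,a,0,\dots,0\}$ and merging the two equal parts is forced to produce the all-zero (determined) state; I would rule this out with a slightly stronger invariant, exploiting that on a tree it is the adversary, not the algorithm, who fixes each component's signed size as that component is assembled, so it can refuse to create two equal positive parts surrounded by balanced ones.

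For odd $n$ the situation is governed solely by domination, and the adversary's job is to keep $2d_{\max}\le\Sigma$ for as long as possible. This is the delicate part, because the two moves pull in opposite directions: the $|a-b|$ merge shrinks $d_{\max}$ but also shrinks $\Sigma$ by $2\min(a,b)$ and can drive the state toward a lone surviving $1$ that trivially dominates a sea of zeros, while the $a+b$ merge keeps $\Sigma$ fixed but can inflate $d_{\max}$. The main obstacle is to balance these effects through a single potential and show the adversary can prevent domination until only a bounded number of components remain; this is exactly where I would import lower bounds for the \emph{weighted} majority problem on $K_n$ (the weights playing the role of the signed sizes $d_i$), which quantify how slowly a dominating block can be built up by such merges. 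Combining the weighted-$K_n$ bound with the parity constraint that $\Sigma$ is odd would show that no determined state is reachable while more than a constant number of components survive, giving $m(T)\ge n-65$; optimising the constant inside this argument is what fixes the value $65$.
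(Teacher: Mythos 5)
Your reduction to the merging game is sound: on a tree every query joins two distinct $q$-components, and your characterization of the terminal states (all $d_i=0$, or $2d_{\max}>\Sigma$) is correct and matches the paper's framework. For even $n$ you also identify the right target (force the final state $\{a,a\}$ with $a>0$). But the step you defer --- ruling out the degenerate collapse $\{a,a,0,\dots,0\}$ --- is not a side case; it is the entire content of the even-$n$ lower bound, and ``the adversary can refuse to create two equal positive parts surrounded by balanced ones'' is an assertion, not an argument. The paper closes exactly this hole in two ways: Proposition~\ref{eventrees} constructs, by a finite improvement process, a fixed balanced coloring of $T$ in which \emph{every} edge separates two unbalanced halves (so the adversary can even commit non-adaptively), and Lemma~\ref{treelemma} gives the adaptive invariant $w(X)=1$ for odd $|X|$ and $w(X)=2\delta(X)$ for even $|X|$, where $\delta(X)$ is the parity of the number of edges leaving $X$; since the $q$-components of a tree themselves form a tree, every leaf component has $\delta=1$ and hence positive weight, which simultaneously excludes the all-balanced state and domination before the last query. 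Some tree-specific invariant of this kind is what your ``slightly stronger invariant'' would have to be, and nothing in your proposal produces it.

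For odd $n$ the gap is larger. Your plan --- keep $2d_{\max}\le\Sigma$ as long as possible and ``import'' weighted-$K_n$ bounds --- does not confront the actual difficulty, which the paper spells out explicitly: on a path, any adversary that keeps the weights bounded is forced to create many balanced components, and then the algorithm can merge everything to their left and to their right into two unbalanced blobs and legitimately stop after roughly $n/2$ queries, i.e.\ domination occurs while many components are still alive. So domination-avoidance alone fails. Moreover, the weighted lower bounds are not estimates of ``how slowly a dominating block can be built up''; they are $2$-adic parity bounds (Proposition~\ref{dectree}) that apply only to states with a very specific profile --- many equal light balls, total weight just above a power of two, and no zero-weight balls, since balanced balls are irrelevant and strictly decrease $m(\underline w)$. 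The paper's proof therefore needs three ingredients absent from your sketch: (1) a centroid-type hitting set $U$ with $|U|\le 2^{k-3}$ whose removal leaves components with at most $32$ edges (Proposition~\ref{centroid}), together with the invariant that $q$-components meeting $U$ are never balanced; (2) an adversary under which the total weight drifts down in controlled steps and stops exactly at $2^k+1$ or $2^k+3$, the point where Lemma~\ref{suly1lemma} and Proposition~\ref{O1G} become applicable and force the algorithm to continue until at most three unbalanced $q$-components remain, all of which must cover $U$; and (3) the final accounting that these surviving components can be connected by adding the edges of at most two of the small trees, i.e.\ at most $2\cdot 32=64$ unqueried edges, which is precisely where the constant $65$ comes from. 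Saying that ``optimising the constant fixes the value $65$'' is circular: without the decomposition there is no constant to optimise.
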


The constant $65$ is probably far from optimal. We will see trees $T$ on $n$ vertices with $m(T)=n-3$, but it is possible that $m(T)\ge n-3$ holds for every tree. We have a better lower bound, $n-6$, for paths. 

We also study the least number of edges a graph must have if we can solve the majority problem as fast as in the unrestricted case, i.e., when $m(G)=n-b(n)$.

\begin{theorem}\label{minedge} For every $n$, there is a graph $G$ with $n$ vertices and $n(1+b(n))$ edges such that $m(G)=n-b(n)$.
\end{theorem}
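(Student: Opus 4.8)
The plan is to prove $m(G)=n-b(n)$ by separately handling the two inequalities. The lower bound is automatic: any algorithm that only queries edges of $G$ is in particular a legal algorithm on $K_n$, so $m(K_n)\le m(G)$, and by the theorem of Saks and Werman quoted in the introduction $m(K_n)=n-b(n)$. Thus it suffices to exhibit a graph $G$ on $n$ vertices with at most $n(1+b(n))$ edges that admits a strategy solving the majority problem in at most $n-b(n)$ queries in the worst case. Having done so, I would obtain the exact edge count by padding: adding edges can only enlarge the set of available queries, so it can never increase $m$, and it can never decrease $m$ below $m(K_n)=n-b(n)$; hence we may freely insert arbitrary extra edges up to $n(1+b(n))$ (for $n$ large enough that $n(1+b(n))\le\binom n2$, the remaining small cases being checked by hand). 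So the whole content is an \emph{upper bound} on a cleverly sparse graph.

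The strategy I would use is a ``binary counter'' in which every live component is \emph{monochromatic}. Process the vertices $v_1,\dots,v_n$ in a fixed order, maintaining registers $R_0,R_1,R_2,\dots$, where $R_j$ holds at most one monochromatic component of size $2^j$. Inserting a vertex places a size-$1$ token in $R_0$; whenever a token of size $2^j$ arrives at an occupied register $R_j$, query the two representatives. On answer SAME the two equal monochromatic tokens merge into a monochromatic token of size $2^{j+1}$, which is carried to $R_{j+1}$; on answer DIFF they have opposite colors, so their union is balanced and is \emph{discarded}. Crucially, only tokens of equal size are ever compared, so every surviving token stays monochromatic. I stop as soon as either all tokens have been discarded (then the whole set is balanced and there is no majority), or the register contents have distinct sizes $2^{j_1}>2^{j_2}>\cdots$; in the latter case the largest token satisfies $2^{j_1}>\sum_{i\ge 2}2^{j_i}$, so its color is the global majority and any of its vertices is a majority vertex. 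This settles correctness.

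For the query count, let $a_j$ be the number of size-$2^j$ tokens ever created, so $a_0=n$. The number of level-$j$ comparisons is $\floor{a_j/2}$, and each SAME answer at level $j$ produces one size-$2^{j+1}$ token, so $a_{j+1}\le \floor{a_j/2}$. The total number of queries is $\sum_j\floor{a_j/2}$, which is monotone in the $a_j$ and is therefore maximized when the adversary always answers SAME, i.e. $a_j=\floor{n/2^j}$. In that case
\[
\sum_{j\ge 0}\floor{\tfrac{1}{2}\floor{n/2^j}}=\sum_{j\ge 0}\floor{n/2^{j+1}}=\sum_{i\ge 1}\floor{n/2^i}=n-b(n),
\]
by Legendre's formula. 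Hence the strategy uses at most $n-b(n)$ queries against every adversary.

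The main obstacle, and the real content of the theorem, is to realize this strategy on a \emph{fixed} graph with only $n(1+b(n))$ edges: the graph $G$ must contain every pair that could be queried across all adversary answer-sequences. The danger is that discarding balanced tokens lets a token meet, over different branches, many different partners (naively, at level $j$ any two of the $\approx n/2^j$ surviving tokens could become adjacent in the surviving order, which would force a near-complete graph on their representatives and blow the edge budget). The plan is to fix the representative of each token in advance and to pre-wire the ``promotions'' so that at each level $j$ the set of comparisons that can ever occur is confined to a fixed family of $O(n/2^{j})$ pairs; summing the geometric series gives $O(n)$ for the doubling backbone, while the dependence on $b(n)$ enters through the $b(n)$ tokens that can survive simultaneously and the wiring needed to route discards past them. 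I would carry this out by a charging argument bounding, for each vertex, the number of levels at which it can serve as a resident representative and the number of incoming representatives it can meet there, aiming for at most $1+b(n)$ such edges per vertex. Controlling this edge set compactly while preserving the ``equal tokens only'' property of the counter is where essentially all the difficulty lies; the preceding steps are routine once that wiring is in place.
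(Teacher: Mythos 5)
Your two completed steps are correct but they are not the theorem. The lower bound $m(G)\ge m(K_n)=n-b(n)$ is immediate (the paper notes it in the introduction), and your register/counter analysis, with $\sum_{i\ge 1}\lfloor n/2^i\rfloor=n-b(n)$, is just the classical Saks--Werman upper bound for $K_n$. The entire content of Theorem~\ref{minedge} is the existence of a graph with only $n(1+b(n))$ edges on which \emph{some} strategy uses at most $n-b(n)$ queries, and this is exactly the part you leave as a ``plan.'' Moreover, the plan cannot succeed for the counter as you describe it: with a fixed insertion order $v_1,\dots,v_n$, already at level $1$ \emph{every} pair of level-$1$ tokens is a realizable comparison. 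Indeed, for any two blocks $\{v_{2a-1},v_{2a}\}$ and $\{v_{2b-1},v_{2b}\}$ with $a<b$, let the adversary answer DIFF on every level-$0$ query before $v_{2a-1}$ and on every level-$0$ query between $v_{2a}$ and $v_{2b-1}$, and SAME inside these two blocks; then these blocks are the first two surviving level-$1$ tokens and your strategy must compare them. Hence pre-wiring the unmodified counter already requires $\Theta(n^2)$ edges at level $1$ alone, and no ``charging argument'' assigning $1+b(n)$ edges per vertex can exist for this strategy. Confining the possible queries to a sparse fixed graph forces a redesign of the algorithm itself, not just of its wiring; that redesign is the missing idea.

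For comparison, the paper's proof does exactly this redesign. The graph is a caterpillar $F_k$ (a path $v_1\dots v_k$, $k=\lfloor n/2\rfloor$, with a pendant leaf $u_i$ at each $v_i$) in which the last $b(n)$ path vertices are additionally joined to \emph{all} other vertices (the ``hubs''), giving at most $n(1+b(n))$ edges. The strategy proceeds in steps: each step takes a fresh hub together with the contiguous block of as-yet-unqueried caterpillar vertices and runs a recursive procedure $A_l$ on this copy of $F_{2^r}$ (with $r$ maximal), which only ever merges two monochromatic $q$-components of equal power-of-two size. A SAME-run makes the copy monochromatic, and its color is then the majority; the first DIFF produces a balanced component, whereupon the step is abandoned and the next step restarts from the \emph{next hub} and the first unqueried vertex. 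Since every $q$-component created has power-of-two order and $n$ cannot be written as a sum of fewer than $b(n)$ powers of two, the final query graph is a forest with at least $b(n)$ components, i.e.\ at most $n-b(n)$ edges were ever queried. The hubs, adjacent to everything, are what allow a step to restart anywhere after a discard without pre-wiring quadratically many contingencies; they are also the precise reason the edge count is $n(1+b(n))$ rather than $O(n)$, which the paper leaves open as a question. This mechanism—contiguous processing plus restart at hubs—is what your proposal would need and does not contain.
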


It would be interesting to determine whether this bound can be improved to $O(n)$, or show a superlinear lower bound.\\

The proof of Theorem \ref{thmtree} uses a \emph{weighted} version of the original (i.e., $G=K_n$ case of the) majority problem, which is defined in the next section.
We think these results are interesting on their own.

In the following, we always suppose that only two colors are used, which we call red and blue.
When both colors contain the same number of balls, then we call the coloring \emph{balanced}.

\section{Weighted majority problems}

Now we define a variant of the majority problem, where the balls are given different weights. 
More precisely, given $k$ balls with  non-negative integer weights $w_1,\dots, w_k$, a ball is a (weighted) \emph{majority ball} if the weight of its color class is more than $\sum_{i=1}^kw_i/2$.
The (weighted) \emph{majority problem} is to find a majority ball (or show that none exists). We will often identify a ball with its weight or its index and talk about a ball with weight $w_i$, or the ball $w_i$, or the ball $i$.

Note that during the running of an adaptive algorithm solving the non-weighted majority problem, at any point the information obtained so-far can be represented by a graph whose vertices are all balls, and the queries asked are edges labeled with DIFF or SAME.
Since now we study the majority problem only for two colors, we can deduce from the labels of the edges the color partition inside every component.
Denote the difference between the sizes of the color classes in each component by $w_i$.
Finishing the algorithm from a given state is equivalent to solving the majority problem with the weights $w_i$.\footnote{This is explained in more details in Section \ref{sec:graphs}.}
Similarly, in the weighted version when we ask a ball with weight $w_i$ and a ball with weight $w_j$, we can consider the answer as merging the two balls into a ball with weight $w_i+w_j$ or $|w_i-w_j|$, depending on the answer.
We will say that the new ball \emph{contains} the two previous balls.


A set of $k$ balls with given weights $w_1,\dots, w_k$ can be represented by a vector $\underline{w}=(w_1,\dots,w_k)$.
We denote the number of queries needed to solve the weighted majority problem in the worst case by $m(\underline{w})$.
So, with this notation, the result of Saks and Werman for the non-weighted problem can be written as $m(1,\ldots,1)=k-b(k)$.
Note that $m(\underline{w}) \le k-1$ and if $\sum^k_{i=1} w_i$ is odd, then $m(\underline{w}) \le k-2$ (if $k\ge 2$).




The weighted problem was first studied in \cite{GerKatPal13}, where the following proposition (which also implies the result of Saks and Werman) was proved, generalizing a result of \cite{HayKutMel02} (which built on \cite{RivVui76}) about the non-weighted variant.
Let $\mu(k)$ denote the largest $l$ such that $2^l$ divides $k$ (and define $\mu(0)=\infty$).
For $\underline{w}=(w_1,\dots,w_k)$ denote by $p$ the number of balanced colorings and by $p_i$ the number of (non-balanced) colorings such that $w_i$ is in the majority class.\footnote{Beware that in \cite{GerKatPal13} a slightly different notation was used, where $p$ denoted the number of balanced $2$-partitions, which is half of the number of balanced colorings, and part (ii) of Proposition \ref{dectree} was not explicitly stated.} 

\begin{proposition}\label{dectree} 
\textbf{(i)} $m(\underline{w})\ge k-\mu(p)$.

\textbf{(ii)} $m(\underline{w})\ge k-1-\mu(p_i)$ for every $i\le k$.
\end{proposition}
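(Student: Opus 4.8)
The plan is to run a divisibility (counting) argument on the decision tree of an optimal algorithm, in the spirit of the Saks--Werman lower bound. Fix an optimal adaptive algorithm and let $\mathcal{T}$ be its decision tree; since each query has two possible answers (SAME or DIFF), $\mathcal{T}$ is binary and its depth equals $d:=m(\underline{w})$. A coloring of the $k$ balls, answered truthfully by the adversary, drives the algorithm to a unique leaf. At a leaf $\ell$ the queries along the root-to-$\ell$ path form a graph on the $k$ balls with edges labelled SAME/DIFF; writing $c(\ell)$ for its number of connected components, the relative colors inside each component are forced, so exactly $2^{c(\ell)}$ colorings reach $\ell$. As the number of queries on this path is at most $d$ and at least $k-c(\ell)$, we get $c(\ell)\ge k-d$ at every leaf. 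Encode the colorings reaching $\ell$ by sign vectors $\epsilon\in\{\pm1\}^{c(\ell)}$ (one orientation per component), let $W_1,\dots,W_{c(\ell)}\ge 0$ be the internal imbalances of the components, and set $S(\epsilon)=\sum_j\epsilon_j W_j$, the signed total weight difference. Correctness forces a dichotomy on leaves: if $\ell$ outputs ``no majority'' then every reaching coloring is balanced, i.e.\ $S(\epsilon)=0$ for all $\epsilon$; and if $\ell$ outputs ``ball $b$ is a majority ball'' then every reaching coloring is non-balanced with $b$ in its majority class. In particular no leaf is mixed.

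For part \textbf{(i)}, at a ``no majority'' leaf all $2^{c(\ell)}$ reaching colorings are balanced, and at a ``majority'' leaf none are. Hence the number of balanced colorings reaching any leaf is either $0$ or $2^{c(\ell)}$, in both cases divisible by $2^{c(\ell)}$ and therefore by $2^{k-d}$ (note $d\le k-1$, so $k-d\ge 1$). Summing over all leaves, the total number $p$ of balanced colorings is divisible by $2^{k-d}$, so $\mu(p)\ge k-d$, which rearranges to $m(\underline{w})=d\ge k-\mu(p)$; when $p=0$ the claim is vacuous since $\mu(0)=\infty$.

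For part \textbf{(ii)} I would refine the ``majority'' leaves. First I would show that if $\ell$ outputs ``ball $b$ is majority'', then the component $J$ containing $b$ dominates: $W_J>\sum_{j\ne J}W_j$. Otherwise, orienting $J$ so that $b$ lies on its heavier side and orienting every other component against it yields a reaching coloring with $S\le 0$, so $b$ is not a strict majority ball, contradicting validity. Given domination, I count the colorings at $\ell$ in which a fixed ball $i$ lies in the majority class. If $i\in J$, then, like $b$, its being in the majority is governed solely by $\epsilon_J$ (which also fixes the global majority color), so $i$ is in the majority for all $2^{c(\ell)}$ colorings or for none, depending on whether $i$ sits on the same internal side as $b$. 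If $i$ lies in a different component $J'$, then $i$ is in the majority exactly when its color matches the majority color, a condition tying $\epsilon_{J'}$ to $\epsilon_J$ that holds for precisely two of the four patterns of $(\epsilon_J,\epsilon_{J'})$, giving $2^{c(\ell)-1}$ colorings. In every case this count is divisible by $2^{c(\ell)-1}$, hence by $2^{k-d-1}$ (and ``no majority'' leaves contribute $0$, since no ball is in a majority there). Summing over all leaves, $p_i$ is divisible by $2^{k-d-1}$, so $\mu(p_i)\ge k-d-1$ and $m(\underline{w})=d\ge k-1-\mu(p_i)$.

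I expect the main obstacle to be the clean justification of the two structural facts --- the correctness dichotomy and, for (ii), the domination of the component of $b$ --- together with checking that degenerate features of a leaf do not disturb the count. Redundant queries inside an already connected component are harmless, as they only increase the path length while keeping $c(\ell)\ge k-d$; components with zero internal imbalance do not affect the dichotomy, which is forced purely by correctness, nor the two-out-of-four count, since each ball still receives a definite color in every reaching coloring. Once these points are settled, both inequalities follow from the same divisibility-of-a-sum mechanism.
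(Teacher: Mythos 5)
Your proof is correct: the decision-tree leaf-counting argument (each leaf is reached by exactly $2^{c(\ell)}$ colorings with $c(\ell)\ge k-d$, correctness forces the all-or-nothing dichotomy at leaves, and for (ii) the domination $W_J>\sum_{j\ne J}W_j$ of the declared ball's component yields counts of $0$, $2^{c(\ell)-1}$ or $2^{c(\ell)}$, so divisibility of $p$ and $p_i$ by $2^{k-d}$ and $2^{k-d-1}$ follows by summing over leaves) is sound, including your handling of zero-imbalance components and the vacuous case $\mu(0)=\infty$. The paper itself gives no proof of this proposition --- it cites \cite{GerKatPal13}, whose argument (generalizing the Rivest--Vuillemin/Hayes--Kutin--van Melkebeek parity technique) is essentially the same divisibility argument you reconstructed.
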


It was also shown in \cite{GerKatPal13} that $m(\underline{w})=k-\mu(p)$ for $\mu(p)\le 2$, but not in general, e.g., for $\underline{w}=\{1,2,3,4,5,6,7\}$ we have $8$ balanced colorings, but $m(\underline{w})=5>7-\mu(8)$.

Our main results about the weighted majority problem are exact bounds for some special $\underline w$.
They are based on the following lemma.

\begin{lemma}\label{suly1lemma} Let $\underline{w}=(w_1,\dots,w_k)$ and $k> 2^n$.

\textbf{(i)} If $w_1=\dots =w_{2^n}=1$ and $\sum_{i=1}^k w_i=2^{n+1}$, then $m(\underline{w})= k-1$.

\vspace{1mm} 

\textbf{(ii)} If $w_1=\dots =w_{2^n}=1$, $\sum_{i=1}^k w_i=2^{n+1}+1$ and $k\ne 2^n+1$, then $m(\underline{w})= k-2$.
\end{lemma}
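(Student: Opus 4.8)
The plan is to prove both parts by matching the elementary upper bounds recorded before the lemma (namely $m(\underline w)\le k-1$, and $m(\underline w)\le k-2$ when $\sum_i w_i$ is odd) against a lower bound supplied by Proposition~\ref{dectree}; everything then reduces to computing the $2$-adic valuation of a count of colorings. The calculation is organized around the generating function $\prod_{i=1}^{k}(1+x^{w_i})$, whose coefficient of $x^{r}$ is the number of colorings whose red class has weight exactly $r$. Splitting off the $2^n$ unit balls factors this as $(1+x)^{2^n}P(x)$, where $P(x)=\prod_{j>2^n}(1+x^{w_j})$ is the generating function of the remaining (heavy) balls and $c_b:=[x^b]P(x)$ counts heavy subsets of weight $b$. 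I assume all weights are positive integers (a weight-$0$ ball never affects which color is in the majority and may be discarded), and I treat $n\ge 1$, the case $n=0$ being immediate from the Saks--Werman result.

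For part (i) the total weight $2^{n+1}$ is even, so I aim to show $\mu(p)=1$ for the number $p$ of balanced colorings; by Proposition~\ref{dectree}(i) this yields $m(\underline w)\ge k-1$, matching the upper bound. Here $p=[x^{2^n}](1+x)^{2^n}P(x)$, and the arithmetic input is the congruence $(1+x)^{2^n}\equiv 1+2x^{2^{n-1}}+x^{2^n}\pmod 4$, which follows from Kummer's (or Legendre's) formula since $\binom{2^n}{i}$ is even for $0<i<2^n$ and is $\equiv 2\pmod 4$ only at $i=2^{n-1}$. Extracting the coefficient of $x^{2^n}$ gives $p\equiv c_0+2c_{2^{n-1}}+c_{2^n}\pmod 4$. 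As the heavy weights sum to $2^n$ we have $c_0=c_{2^n}=1$, while $c_{2^{n-1}}$ is even because subsets of weight $2^{n-1}$ are matched with their distinct complements; hence $p\equiv 2\pmod 4$ and $\mu(p)=1$.

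For part (ii) the total weight $2^{n+1}+1$ is odd, so there are no balanced colorings and I instead use Proposition~\ref{dectree}(ii): it suffices to exhibit one index $i$ with $\mu(p_i)=1$, giving $m(\underline w)\ge k-2$. Taking $i$ to be a unit ball and writing $Q_i(x)=\prod_{j\ne i}(1+x^{w_j})$, a swap-colors symmetry together with the palindromicity of $Q_i$ (its degree $2^{n+1}$ is even) shows $p_i=2^{k-1}+N$, where $N=[x^{2^n}]Q_i(x)$ is the central coefficient, i.e.\ the number of weight-$2^n$ subsets of the remaining $k-1$ balls. Since $k\ge 3$ we get $p_i\equiv N\pmod 4$, so it remains to prove $N\equiv 2\pmod 4$. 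Conditioning on the intersection with the set $B$ of heavy balls gives $N=\sum_{S\subseteq B,\,1\le w(S)\le 2^n}\binom{2^n-1}{\,w(S)-1\,}$; pairing each $S$ with $B\setminus S$ and using $\binom{2^n-1}{w-1}=\binom{2^n-1}{2^n-w}$, every pair contributes $2\binom{2^n-1}{w-1}$, which is $\equiv 2\pmod 4$ because all coefficients $\binom{2^n-1}{\cdot}$ are odd (by Lucas, as $2^n-1$ is all $1$'s in binary). The number of such pairs is $(2^{m}-2)/2=2^{m-1}-1$ with $m=|B|$, which is odd exactly when $m\ge 2$, i.e.\ exactly under the hypothesis $k\ne 2^n+1$; then $N\equiv 2\pmod 4$ and $\mu(p_i)=1$.

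The framework---reducing to $2$-adic valuations via Proposition~\ref{dectree} and evaluating them through the generating function---is routine; the real work, and the main obstacle, is the modulo-$4$ bookkeeping in part (ii). One must find the right way to collapse $p_i$ to the single central coefficient $N$, and then see that the complementation pairing converts $N\bmod 4$ into a count of pairs whose parity is governed by $2^{m-1}-1$. It is exactly this parity that forces the hypothesis $k\ne 2^n+1$: with a single heavy ball of weight $2^n+1$ the number of pairs is even, $N\equiv 0\pmod 4$, and the method correctly breaks down, consistent with the fact that such a ball is always in the majority and the problem becomes trivial.
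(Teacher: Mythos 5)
Your proof is correct and takes essentially the same route as the paper: both parts pair the trivial upper bounds with the lower bounds of Proposition~\ref{dectree}, and compute the relevant $2$-adic valuations by splitting off the $2^n$ unit balls and using the parity pattern of binomial coefficients (your generating-function congruence $(1+x)^{2^n}\equiv 1+2x^{2^{n-1}}+x^{2^n}\pmod 4$ is exactly the paper's observation that $\binom{2^n}{i}\equiv 2\pmod 4$ only at $i=2^{n-1}$). The only substantive variation is in part \textbf{(ii)}: you certify $\mu(p_i)=1$ for a \emph{unit} ball via the identity $p_i=2^{k-1}+N$ and a complementation pairing on heavy subsets, whereas the paper distinguishes the heavy ball $k$ and shows the number of majority colorings with ball $k$ blue is odd; both arguments invoke the hypothesis $k\ne 2^n+1$ at the same point, and your explicit positivity assumption on the weights is also (implicitly) required by the paper's own proof.
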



\begin{proof} For both statements we use Proposition \ref{dectree}. Let us start with \textbf{(i)}.  We are going to calculate $\mu(p)$.
First color only the balls whose index is from $\{2^n+1,\ldots,k\}$.
Denote by $B$ and $R$, respectively, the blue and red balls whose index is from $\{2^n+1,\ldots,k\}$.
Let $x:=|\sum_{i\in B} w_i-\sum_{i\in R} w_i|$. Note that $x$ is an even integer, and for a fixed $x$ there are $2\binom{2^n}{2^{n-1}-x/2}$ ways in which we can color $\{1, \ldots,2^n\}$ to make the coloring balanced (note that this value is obtained by considering both the cases $\sum_{i\in B} w_i-\sum_{i\in R} w_i\le 0$ and $\sum_{i\in B} w_i-\sum_{i\in R} w_i\ge 0$).
It is easy to see that $2\binom{2^n}{2^{n-1}-x/2}$ is divisible by $4$ except for the case $x=2^n$, when this number is exactly $2$. This means that the number of balanced colorings is $2\bmod 4$. Thus Proposition \ref{dectree} (i) gives $m(\underline{w}) \ge k-1$, and since $m(\underline{w}) \le k-1$ always holds, we have equality.

The proof of \textbf{(ii)} goes similarly. We are going to calculate $\mu(p_k)$. We define $B$ and $R$ in the same way. Without loss of generality we can assume that ball $k$ is blue and let $y=\sum_{i\in B} w_i-\sum_{i\in R} w_i$. Then the number of colorings of the first $2^n$ balls such that blue is the majority color is $\sum_{i=2^{n-1}-\lfloor y/2 \rfloor}^{2^n} \binom{2^n}{i}$. If $y<2^n$, each term here is divisible by 2, except the last one, while if $y>2^n$, each term is divisible by 2.
There are $2^{k-1-2^n}$ ways to color the balls from $\{2^n+1,\ldots,k-1\}$ out of which $y>2^n$ only once.
This means that when ball $k$ is blue, then the number of colorings (of \emph{all} the balls) where blue is the majority color is odd. Of course, the same is true when ball $k$ is red. Thus Proposition \ref{dectree} (ii) gives $m(\underline{w}) \ge k-2$, and since $m(\underline{w}) \le k-2$ holds whenever $\sum^k_{i=1} w_i$ is odd, we have equality.
\end{proof}

Note that in the above proof we do not need at all that the first few weights are $1$, we only need that they are the same and that their number is a power of two.
We also do not need in \textbf{(i)} that the sum of all the weights is exactly $2^{n+1}$, only that $x=\sum_{i\in B} w_i-\sum_{i\in R} w_i=2^n$ can happen in an odd number of ways.
We also do not need in \textbf{(ii)} that the sum of all the weights is exactly $2^{n+1}+1$, only that $-2^n< y=\sum_{i\in B} w_i-\sum_{i\in R} w_i\le 2^n$ can happen in an odd number of ways (with fixed ball $k$ always blue).
A sufficient condition for this is that $y>-2^n$ always holds, while $y\le 2^n$ holds except when all balls are blue, i.e., we need $w_k-\sum_{i=2^n+1}^{k-1} w_i>-2^n$ and $\sum_{i=2^n+1}^k w_i-w_j\le 2^{n}+w_j$ for any $2^n<j<k$.
To summarize, this proves the following.

\begin{lemma}\label{suly1formalemma} Let $\underline{w}=(w_1,\dots,w_k)$ and $k> 2^n+1$.

\textbf{(i)} If $w_1=\dots =w_{2^n}$ and there are an odd number of partitions $R\cup B=\{2^n+1,\ldots, k\}$ such that $\sum_{i\in B} w_i-\sum_{i\in R} w_i=w_12^n$, then $m(\underline{w})= k-1$.

\vspace{1mm} 

\textbf{(ii)} If $w_1=\dots =w_{2^n}$ and $\sum_{i=1}^k w_i\le w_12^{n+1}+2w_j$ for any $2^n<j\le k$, with the inequality being strict for $j=k$, then $m(\underline{w})\ge k-2$.
\end{lemma}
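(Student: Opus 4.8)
The plan is to apply Proposition~\ref{dectree} in exactly the manner of Lemma~\ref{suly1lemma}, but tracking only the two features of the weight vector that the argument truly uses: that the first $2^n$ weights are equal (to a common value $w:=w_1$) and that their number is a power of two. In both parts I would condition first on a two-coloring of the \emph{heavy} balls $\{2^n+1,\dots,k\}$ and then count, through a single binomial coefficient, in how many ways the $2^n$ equal balls can be colored to complete it to a balanced coloring (part (i)) or to one in which a fixed color is the strict majority (part (ii)). The proof then reduces to a congruence count of these binomials, whose only input is the $2$-adic valuation formula $\mu\big(\binom{2^n}{j}\big)=n-\mu(j)$ for $0<j<2^n$; in particular $\binom{2^n}{j}$ is odd exactly for $j\in\{0,2^n\}$ and is $\equiv 2 \pmod 4$ exactly for $j=2^{n-1}$.

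For part (i) I write $N(x)$ for the number of colorings of the heavy balls with signed surplus $x=\sum_{i\in B}w_i-\sum_{i\in R}w_i$. Given such a coloring, the number of completions on the first $2^n$ balls that balance it is $\binom{2^n}{2^{n-1}-x/(2w)}$ (read as $0$ off the integer range), so $p=\sum_x N(x)\binom{2^n}{2^{n-1}-x/(2w)}$. By the valuation facts above, modulo $4$ only the terms $x=\pm w2^n$ (binomial $\equiv 1$) and $x=0$ (binomial $\equiv 2$) survive, and using the color-swap symmetry $N(x)=N(-x)$ this gives $p\equiv 2N(w2^n)+2N(0)\pmod 4$. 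The key simplification is that $N(0)$, being the number of balanced colorings of the heavy balls, is even, since color-swap is a fixed-point-free involution on it; hence $p\equiv 2N(w2^n)\pmod 4$. Thus the hypothesis that $x=w2^n$ is achieved an odd number of times says exactly that $N(w2^n)$ is odd, forcing $\mu(p)=1$; Proposition~\ref{dectree}(i) then yields $m(\underline w)\ge k-1$, and the trivial bound $m(\underline w)\le k-1$ gives equality.

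For part (ii) I run the companion computation for $p_k$. Fixing ball $k$ blue and letting $y$ be the signed surplus of a coloring of the heavy balls, the number of completions on the first $2^n$ balls making blue the strict majority is $\sum_{a>2^{n-1}-y/(2w)}\binom{2^n}{a}$; since $\binom{2^n}{a}$ is odd only at $a\in\{0,2^n\}$, this is odd precisely when $a=2^n$ is counted while $a=0$ is not, i.e.\ precisely when $-w2^n<y\le w2^n$. Hence the number of blue-majority colorings with $k$ blue is congruent mod $2$ to the number of heavy-ball colorings (with $k$ blue) satisfying $-w2^n<y\le w2^n$. The stated inequalities are designed to make this count equal $2^{k-1-2^n}-1$: the strict inequality at $j=k$ forces $y>-w2^n$ for \emph{every} such coloring, while the inequalities at $2^n<j<k$ force $y\le w2^n$ for every coloring except the all-blue one, which is the unique coloring overshooting the upper threshold. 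Since $k>2^n+1$, the number $2^{k-1-2^n}-1$ is odd; combined with the symmetry $p_k=2\cdot(\text{blue-majority colorings with }k\text{ blue})$ this gives $\mu(p_k)=1$, and Proposition~\ref{dectree}(ii) yields $m(\underline w)\ge k-2$.

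The main obstacle, as already in Lemma~\ref{suly1lemma}, is the exact congruence bookkeeping: one must be sure that the bulk of completions contributes $0$ modulo $4$ in (i) and an even number in (ii), so that only the extreme terms of the binomial sum survive. This is precisely what the valuation formula for $\binom{2^n}{j}$ delivers, and it is the reason the first block of weights must number a power of two. The one delicate point in (ii) that I would check explicitly is that the all-blue heavy coloring genuinely overshoots, i.e.\ $\sum_{i>2^n}w_i>w2^n$, so that it is the unique excluded coloring and the surviving count is $2^{k-1-2^n}-1$ rather than $2^{k-1-2^n}$; this is exactly the content of the phrase ``except when all balls are blue'' and must be read as part of the hypotheses.
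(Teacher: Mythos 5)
Your proposal is correct and follows essentially the same route as the paper: the paper obtains Lemma~\ref{suly1formalemma} precisely by rerunning the Proposition~\ref{dectree} computation from the proof of Lemma~\ref{suly1lemma}, noting that all that matters is that the first $2^n$ weights are equal (a power of two many of them) and the parity of the extreme surplus counts; your mod-$4$ bookkeeping in (i) (using $N(x)=N(-x)$ and that $N(0)$ is even via the fixed-point-free color swap) and your mod-$2$ count in (ii) match that argument step for step.

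Your ``delicate point'' about (ii) is genuine, and you are right that it must be treated as an additional hypothesis: the displayed inequalities do not imply that the all-blue heavy coloring overshoots, i.e.\ that $\sum_{i>2^n}w_i> w_1 2^n$ (equivalently $\sum_{i=1}^k w_i> w_1 2^{n+1}$), and without this the surviving count is $2^{k-1-2^n}$, which is even, so the argument---and in fact the statement itself---fails. For instance, $\underline{w}=(2,2,1,1,1)$ with $n=1$ satisfies all the stated hypotheses of (ii), yet $m(\underline{w})=2<k-2$: query the two balls of weight $2$; on SAME they are majority balls, and on DIFF query two of the weight-$1$ balls, after which either those two (on SAME) or the remaining fifth ball (on DIFF) is guaranteed to be a majority ball. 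So the paper's phrase ``except when all balls are blue'' indeed hides an extra condition that is not captured by the inequalities in the lemma, exactly as you suspected; with that condition added, your proof is complete.
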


These imply, for example, that $m(3,3,7,8,9)=4$ and $m(3,3,5,5,5)\ge 3$.

For simplicity, we state the later consequences only for Lemma \ref{suly1lemma}, but similar generalizations of Lemma \ref{suly1formalemma} also hold.

\begin{corollary}\label{suly1cor} 
\textbf{(i)} If $w_1=\dots =w_{2^n+2s}=1$ and $\sum_{i=1}^k w_i=2^{n+1}+2s$, then $m(\underline{w})\ge k-1-s$.

\vspace{1mm} 

\textbf{(ii)} If $w_1=\dots =w_{2^n+2s}=1$, $\sum_{i=1}^k w_i=2^{n+1}+2s+1$ and $w_k\ne 2^n+1$, then $m(\underline{w})\ge k-2-s$.
\end{corollary}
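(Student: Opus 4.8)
The plan is to derive both inequalities from Lemma~\ref{suly1lemma} (more conveniently, from its generalization Lemma~\ref{suly1formalemma}) by a contraction argument built on a single monotonicity principle: \emph{resolving one query can only make the instance easier}. Precisely, if $\underline{w}'$ is obtained from $\underline{w}$ by replacing two balls $w_i,w_j$ with one ball of weight $w_i+w_j$ (a \textsc{same}-contraction) or of weight $\abs{w_i-w_j}$ (a \textsc{diff}-contraction), then $m(\underline{w})\ge m(\underline{w}')$. Indeed, an adversary for $\underline{w}$ may commit in advance to the corresponding answer on the pair $\{i,j\}$; from then on balls $i$ and $j$ behave as a single merged ball, so any algorithm faces exactly the instance $\underline{w}'$, and a query on $\{i,j\}$ itself is wasted (its answer is forced). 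Following an optimal adversary for $\underline{w}'$ on all remaining queries thus forces at least $m(\underline{w}')$ queries. The idea is therefore to spend $s$ contractions to land on an instance covered by Lemma~\ref{suly1lemma}, losing exactly the additive $s$ in the statement.

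For part (i) I would perform $s$ \textsc{diff}-contractions, each merging one of the $2s$ surplus unit balls (those of index in $\{2^n+1,\dots,2^n+2s\}$) with a non-core ball of weight at least $2$, so that every contraction decreases the total weight by exactly $2$ and keeps all weights positive. The first $2^n$ unit balls are never touched, the number of balls drops to $k-s$, and the total weight drops from $2^{n+1}+2s$ to $2^{n+1}$, so the non-core coordinates become positive integers summing to exactly $2^n$. Then the only partition $R\cup B$ of the non-core indices with $\sum_{i\in B}w_i-\sum_{i\in R}w_i=1\cdot 2^n$ is $B=$ everything, which is an odd number (namely one), so Lemma~\ref{suly1formalemma}(i) applies and gives $m(\underline{w}^\ast)=(k-s)-1$; monotonicity yields $m(\underline{w})\ge k-1-s$. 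Part (ii) is identical: $s$ weight-reducing \textsc{diff}-contractions bring the total from $2^{n+1}+2s+1$ down to $2^{n+1}+1$, and Lemma~\ref{suly1formalemma}(ii) (whose inequality hypothesis is automatic once all weights are positive) gives $m(\underline{w}^\ast)\ge (k-s)-2$. Here the hypothesis $w_k\ne 2^n+1$ is exactly what is needed to guarantee $k-s>2^n+1$ in the reduced instance (it is binding only when $s=0$, where it reproduces the condition $k\ne 2^n+1$ of Lemma~\ref{suly1lemma}(ii)).

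The one genuine obstacle is realizing the $s$ total-reducing contractions cleanly, since each must lower the total by precisely $2$ and must not create a zero-weight ball; this forces us to pair every chosen surplus unit with a non-core ball of weight at least $2$, and such balls may be scarce. When they are too few to absorb all $s$ surplus units—in the extreme, when every ball already has weight $1$—this scheme breaks, and I would dispatch that regime directly. There $\underline{w}=(1,\dots,1)$ with $k=2^{n+1}+2s$, so Saks--Werman gives $m(\underline{w})=k-b(k)$, and since $k=2(2^n+s)$ we have $b(k)=b(2^n+s)\le 1+b(s)\le s+1$, which already yields $m(\underline{w})\ge k-1-s$ (and the analogous estimate in the odd case). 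The remaining intermediate configurations interpolate between the ``enough large balls'' case and the all-ones case; checking that a suitable sequence of contractions always exists (possibly first building a weight-$2$ ball, or targeting a larger power of two when $s$ is large relative to the available non-core weight) is the only point where a short case analysis seems unavoidable.
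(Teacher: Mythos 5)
Your approach is genuinely different from the paper's. The paper proves both parts by a purely adversarial argument: it sacrifices the $2s$ surplus unit balls in balanced red--blue pairs, and it recovers the factor $2$ (getting $k-1-s$ rather than the easy $k-1-2s$) by \emph{delaying} each revelation until the moment the algorithm first queries a weight-$1$ ball, so that each of the $s$ revelations coincides with, and thereby wastes, one of the algorithm's own queries; after that it hands over to the adversary of Lemma~\ref{suly1lemma}. You instead work statically: a contraction-monotonicity principle ($m(\underline{w})\ge m(\underline{w}')$ when two balls are merged according to a committed answer) plus a reduction that spends only $s$ contractions because each contraction absorbs a \emph{single} surplus unit into a non-core ball of weight at least $2$, leaving the other $s$ surplus units alive inside the final hard instance. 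Where it applies, this is clean, and your verification of the hypotheses of Lemma~\ref{suly1formalemma} for the contracted instance is correct (unique partition achieving difference $2^n$ in (i); the inequality in (ii) automatic for positive weights, with $w_k\ne 2^n+1$ needed only when $s=0$). One caveat: the monotonicity principle, while true, needs more care than ``the adversary commits in advance,'' since the algorithm may never query $\{i,j\}$ and instead grow two separate components around $i$ and $j$; the adversary must translate answers between the real and contracted games, which works because $\left\{|A+B|,|A-B|\right\}=\left\{|A|+|B|,\bigl||A|-|B|\bigr|\right\}$ and because a position is solvable exactly when one component outweighs all others combined --- but this bookkeeping should be written out.

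The genuine gap is the one you flag yourself: feasibility of the $s$ absorbing contractions. The non-core weight is only $2^n$ (resp.\ $2^n+1$), so if there are $c$ non-core balls, at most $2^n-c$ (resp.\ $2^n+1-c$) weight-reducing contractions exist, and your scheme stalls whenever $s$ exceeds this. Of the two escape routes you name, one fails: ``first building a weight-$2$ ball'' by SAME-contracting two surplus units costs an extra contraction without lowering the total weight, so disposing of the $2s$ surplus weight that way uses $2s$ contractions and recovers only the weak bound $k-1-2s$. What does work is to push your Saks--Werman fallback through all intermediate cases: greedily absorb until every ball has weight $1$; at that point, in part (i), the contracted instance consists of $k-(2^n-c)=2(s+c)$ unit balls, and the required inequality $2(s+c)-b(s+c)\ge k-1-s$ reduces, writing $s+c=2^n+r$ with $r\ge 1$ (which holds in this regime), to $b(2^n+r)\le r+1$, true by subadditivity of $b$; part (ii) is analogous with one extra unit ball. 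So your proof is completable, but as submitted the interpolation step is missing, and the specific patch you suggest for it would break the accounting that the whole argument depends on. The paper's delayed-revelation trick avoids this case analysis entirely, precisely because it never needs heavy balls: it cancels surplus units against each other.
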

\begin{proof}
We only prove \textbf{(i)} - the proof of \textbf{(ii)} goes the same way.
First, we prove the weaker statement that $m(\underline{w})\ge k-1-2s$.
We reveal $2s$ balls of weight 1 such that half of them are red and half of them are blue, and then apply Lemma \ref{suly1lemma} for the remaining balls.

For the bound $m(\underline{w})\ge k-1-s$ we need one more trick.
We run the adversarial algorithm that gives the lower bound in Lemma \ref{suly1lemma}, until the first ball of weight 1 is queried.
When this happens, then we reveal that it is red, and we reveal another ball of weight 1 that it is blue.
We do this $s$ times, and after that proceed according to the adversarial algorithm.
\end{proof}

Call a vector $\underline{w}=(w_1,\dots,w_k)$ \emph{hard} if $m(\underline{w})=k-1$ and $\sum_{i=1}^n w_i$ is even, or $\sum_{i=1}^n w_i$ is odd and $m(\underline{w})=k-2$.
Thus Lemma \ref{suly1lemma} states that the vectors satisfying its conditions are hard.


\begin{observation}\label{obs}
Let $\underline{w}=(w_1,w_2,\dots,w_k)$ be a hard vector with $w_1=w_2$ and let $\underline{w}'=(2w_1,w_3,w_4,\dots,w_k)$. Then $\underline{w}'$ is 
also hard.
\end{observation}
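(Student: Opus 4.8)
The plan is to prove the statement by running, on $\underline{w}$, the simplest possible algorithm that begins by querying the two equal balls $w_1=w_2$, and then reading off what this forces about $\underline{w}'$. First I would record the routine facts. Since $w_1=w_2$, the two vectors have the same total weight, $\sum_i w_i' = 2w_1+\sum_{i\ge 3}w_i = \sum_i w_i$, so $\underline{w}$, $\underline{w}'$ and the subvector $(w_3,\dots,w_k)$ all have sums of the same parity. Moreover $\underline{w}'$ has $k-1$ balls, so the general upper bounds (for a vector of length $\ell$, $m\le \ell-1$, and $m\le \ell-2$ when the sum is odd) give $m(\underline{w}')\le k-2$ in the even case and $m(\underline{w}')\le k-3$ in the odd case. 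It therefore suffices to prove the matching lower bounds.

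For the lower bound I would establish the reduction inequality
\[
m(\underline{w}) \;\le\; 1 + \max\!\big(m(\underline{w}'),\, m(w_3,\dots,w_k)\big).
\]
To see this, query balls $1$ and $2$. On the answer SAME they merge into a single ball of weight $w_1+w_2=2w_1$, leaving exactly the configuration $\underline{w}'$; on the answer DIFF they merge into a ball of weight $|w_1-w_2|=0$. A ball of weight $0$ is useless — querying it against any ball of weight $w$ returns $w$ under both answers, so it yields no information, and it never affects who is in the majority — hence the DIFF branch is equivalent to solving $(w_3,\dots,w_k)$. Taking the worse of the two branches and adding the initial query gives the displayed inequality.

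It then remains to combine this with the hardness of $\underline{w}$ and a crude count on the subvector. In the even case, hardness gives $m(\underline{w})=k-1$, so the inequality yields $\max\big(m(\underline{w}'),m(w_3,\dots,w_k)\big)\ge k-2$; but $(w_3,\dots,w_k)$ has only $k-2$ balls, so $m(w_3,\dots,w_k)\le k-3<k-2$, and the maximum must be attained by $m(\underline{w}')$, giving $m(\underline{w}')\ge k-2$. With the upper bound this is an equality, and as the sum is even, $\underline{w}'$ is hard. The odd case is identical: hardness gives $m(\underline{w})=k-2$, hence the maximum is $\ge k-3$, while $(w_3,\dots,w_k)$ has odd sum and $k-2$ balls, so $m(w_3,\dots,w_k)\le k-4<k-3$; thus $m(\underline{w}')\ge k-3$, matching the upper bound. (The degenerate case $k=2$, where $(w_3,\dots,w_k)$ is empty, is trivial and can be checked directly.)

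The step I expect to require the most care is the DIFF branch. I must argue cleanly that the weight-$0$ ball it produces can be discarded, so that branch costs at most $m(w_3,\dots,w_k)$, and then that this quantity lies \emph{strictly} below the relevant threshold for purely cardinality-and-parity reasons. That strict inequality is exactly what makes the whole lower bound ``fall onto'' $m(\underline{w}')$; without it, the maximum in the reduction inequality could a priori be explained by the smaller subvector rather than by $\underline{w}'$.
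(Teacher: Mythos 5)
Your proof is essentially the paper's proof in a different logical dress: the paper argues contrapositively (assume $\underline{w}'$ is not hard, query $(w_1,w_2)$ first, and conclude $\underline{w}$ is not hard), while you run exactly the same first query and branch analysis directly, packaging it as the inequality $m(\underline{w})\le 1+\max\bigl(m(\underline{w}'),\,m(w_3,\dots,w_k)\bigr)$ and then letting the hardness of $\underline{w}$ push the lower bound onto $m(\underline{w}')$. Both proofs rest on the same two facts: the weight-$0$ ball produced by a DIFF answer can be discarded, and the subvector $(w_3,\dots,w_k)$ obeys the trivial cardinality/parity upper bounds.

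One small slip: in the odd case with $k=3$ your claimed bound $m(w_3,\dots,w_k)\le k-4$ is false, since $(w_3)$ is a single ball and $m(w_3)=0$, not $-1$ (the bound $m\le \ell-2$ for odd sum requires $\ell\ge 2$ balls, as the paper notes). This does not break the result — there the desired conclusion $m(\underline{w}')\ge k-3=0$ holds trivially, so the case just needs to be mentioned separately, like your $k=2$ remark. It is worth noting that the paper's contrapositive formulation silently avoids this issue: for $k=3$ with odd sum, a two-ball vector with odd total weight has $m(\underline{w}')=0$ and is therefore automatically hard, so the assumption ``$\underline{w}'$ is not hard'' is vacuous in that case.
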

\begin{proof}
 If $k=2$, then $\underline{w'}$ is hard by definition.
 Let us assume that $k\ge 3$ and $\underline{w}'$ is not hard. We will show that $\underline{w}$ is not hard either. We ask $w_1$ and $w_2$ in the first query. If the answer is DIFF, we can obviously finish with $k-3$ further queries if $\sum_{i=1}^k w_i$ is even and $k-4$ further queries if $\sum_{i=1}^k w_i$ is odd, thus  $\underline{w}$ cannot be hard. If the answer is SAME, we apply our algorithm for $\underline{w}'$ to reach the same conclusion using that $\underline{w}'$ is not hard.
\end{proof}

\begin{question}
    Does the reverse direction also hold in the above observation?
\end{question}

Also, the respective statement might hold when $m(\underline{w})$ is smaller, but we do not know of other, generally applicable sufficient conditions.

Combining Observation \ref{obs} with Lemma \ref{suly1lemma}, we obtain the following statement.

\begin{lemma}\label{suly2lemma}  
If $w_1,\dots,w_j$ are each powers of two and 

\vspace{2mm}

\textbf{(i)} $\sum_{i=1}^{j} w_i=2^{n}$ and $\sum_{i=1}^k w_i=2^{n+1}$, then $\underline{w}$ is hard, i.e.,  $m(\underline{w})= k-1$.

\vspace{1mm} 

\textbf{(ii)} $\sum_{i=1}^{j} w_i=2^{n}$, $\sum_{i=1}^k w_i=2^{n+1}+1$ and $k>2^n+1$, then $\underline{w}$ is hard, i.e., $m(\underline{w})= k-2$.
\end{lemma}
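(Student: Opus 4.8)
The plan is to reduce both parts to Lemma~\ref{suly1lemma} by first replacing each of the power-of-two weights $w_1,\dots,w_j$ by the corresponding number of unit weights, and then undoing this replacement one merge at a time via Observation~\ref{obs}. Throughout, I will use the fact that $m(\underline w)$---and hence the property of being hard---depends only on the multiset $\{w_1,\dots,w_k\}$, so that Observation~\ref{obs} may be applied to \emph{any} pair of equal coordinates, not merely the first two.

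First I would form the expanded vector $\underline v$ obtained from $\underline w$ by replacing each $w_i=2^{a_i}$ with $i\le j$ by a block of $2^{a_i}$ ones, leaving $w_{j+1},\dots,w_k$ untouched. Since $\sum_{i=1}^j w_i=2^n$, the vector $\underline v$ begins with exactly $2^n$ ones and has the same total weight as $\underline w$, namely $2^{n+1}$ in case (i) and $2^{n+1}+1$ in case (ii). The number of balls of $\underline v$ is $2^n+(k-j)$; because $\sum_{i=j+1}^k w_i>0$ we have $k>j$, so this exceeds $2^n$, and Lemma~\ref{suly1lemma}(i) applies in case (i). For case (ii), Lemma~\ref{suly1lemma}(ii) additionally requires the number of balls to differ from $2^n+1$, i.e.\ $k-j\ne 1$; this is exactly where the hypothesis $k>2^n+1$ enters, since $w_1,\dots,w_j$ being positive powers of two summing to $2^n$ forces $j\le 2^n$, whence $k>2^n+1\ge j+1$ and $k-j\ge 2$. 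In either case $\underline v$ is hard.

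Next I would recover $\underline w$ from $\underline v$ by reversing the expansion. For each $i\le j$, the $2^{a_i}$ ones forming the $i$th block can be assembled into a single weight $2^{a_i}$ by a sequence of $2^{a_i}-1$ merges arranged as a complete binary tree: first pair up ones into twos, then pair the twos into fours, and so on. At every such merge I combine two weights that are currently equal, so Observation~\ref{obs} applies and preserves hardness; after all $\sum_{i=1}^j(2^{a_i}-1)=2^n-j$ merges I arrive exactly at $\underline w$, which is therefore hard as well. Since hardness means $m(\underline w)=k-1$ in case (i) and $m(\underline w)=k-2$ in case (ii), this gives the desired conclusion.

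The argument is essentially bookkeeping, so there is no deep obstacle; the only points demanding care are verifying that the expanded vector meets the precise numerical hypotheses of Lemma~\ref{suly1lemma} (in particular the $\ne 2^n+1$ condition in part (ii), which is what forces the use of $k>2^n+1$), and confirming that a valid pair of equal weights is available at every step of the merging process so that Observation~\ref{obs} can legitimately be invoked. Both follow from the complete-binary-tree scheduling of the merges, under which each newly created level of weights is consumed in equal pairs.
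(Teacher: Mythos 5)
Your proof is correct and takes essentially the same route the paper intends: the paper proves this lemma simply by the remark ``Combining Observation~\ref{obs} with Lemma~\ref{suly1lemma}, we obtain the following statement,'' i.e.\ exactly your scheme of expanding each power-of-two weight into unit weights, invoking Lemma~\ref{suly1lemma}, and then merging equal weights pairwise to recover $\underline{w}$ while preserving hardness. Your explicit checks---that $k>j$ guarantees the ball count of the expanded vector exceeds $2^n$, and that $j\le 2^n$ together with $k>2^n+1$ forces $k-j\ge 2$ so the $\ne 2^n+1$ hypothesis of Lemma~\ref{suly1lemma}(ii) holds---are precisely the bookkeeping details the paper leaves implicit.
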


Note that \textbf{(i)} of Lemma \ref{suly1lemma} states that if the sum of the weights in $\underline{w}$ is a power of two, and at least half of that weight is given by balls of weight 1, then $\underline{w}$ is hard. Lemma \ref{suly2lemma} shows that weight 1 can be replaced by any weights that are powers of two, and  \textbf{(ii)} of Lemma \ref{suly1lemma} can be similarly improved.
If Observation \ref{obs} held for non-hard vectors as well, then we could obtain an improvement of Lemma \ref{suly2lemma}, similar to Corollary \ref{suly1cor}.
Instead, we state the following weaker statement.

\begin{corollary}\label{suly2cor} 
If $w_1,\dots,w_j$ are each powers of two and 

\vspace{2mm}

$\sum_{i=1}^{j} w_i=2^{n}$, $w_{j+1}=1$, $\sum_{i=1}^k w_i=2^{n+1}+3$ and $k>2^n+2$, then $m(\underline{w})\ge k-3$.
\end{corollary}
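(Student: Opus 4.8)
The plan is to apply Proposition~\ref{dectree}(ii): it suffices to exhibit a single index $i$ with $\mu(p_i)\le 2$, i.e.\ with $8\nmid p_i$, for then $m(\underline w)\ge k-1-\mu(p_i)\ge k-3$. Since the total weight $2^{n+1}+3$ is odd, every coloring has a strict majority (weight $\ge 2^n+2$), and by swapping colors $p_i=2N_i$, where $N_i$ counts the colorings in which ball $i$ is red and red is the majority. So the task reduces to finding an index $i$ with $4\nmid N_i$.

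The natural first choice is $i=j+1$. Deleting the weight-$1$ ball $w_{j+1}$ leaves a vector $\underline w^-$ of even total $2^{n+1}+2$; requiring ball $j+1$ red and red-weight $\ge 2^n+2$ is then exactly requiring the remaining balls to carry red-weight $\ge 2^n+1$, the half-threshold. Splitting the relevant colorings of $\underline w^-$ into the balanced ones and the strictly unbalanced ones (the latter pairing off under complementation) gives $N_{j+1}=2^{k-2}+E/2$, where $E$ is the number of balanced colorings of $\underline w^-$; hence $p_{j+1}=2^{k-1}+E\equiv E\pmod 8$ (using $k\ge 4$). Thus this index already settles every instance with $8\nmid E$. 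Here the power-of-two structure enters through $\prod_{i\le j}(1+x^{w_i})\equiv 1+x^{2^n}\pmod 2$, together with the fact that $k>2^n+2$ forces at least $2k-(2^{n+1}+3)\ge 3$ balls of weight $1$; these should pin down $E$ modulo $8$ in the generic case.

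The remaining, genuinely delicate case is $8\mid E$, which does occur — for instance $\underline w=(1,1,1,1,1,1,1,1,1,3,7)$ with $n=3$ has $E=56$ — and there $i=j+1$ yields only $k-4$. I note that the reveal-type reduction (deleting a canceling red/blue pair of equal balls to drop onto a hard instance of Lemma~\ref{suly2lemma}, then recovering one query as in Corollary~\ref{suly1cor}) is not available in these configurations: in the example no canceling pair reduces the vector to a valid hard instance, since after removing the only repeated weight the surviving powers of two can no longer reach $2^n$. Instead I would pick a heavier ball $i$ and write $N_i=2^{k-1}-C_i$, where $C_i$ is the number of subsets avoiding $i$ of weight at most $2^n+1-w_i$; the goal becomes $C_i\not\equiv 0\pmod 4$. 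The main obstacle is to prove that such an index always exists: the constraint $k>2^n+2$ forces the balls of weight $\ge 2$ to be few and the light part to consist almost entirely of $1$'s, so that $C_i$ becomes a short sum of binomial coefficients whose residue modulo $4$ can be controlled (via Kummer's or Lucas' theorem). Turning this structural picture into a clean guarantee of a good index — uniformly across all configurations with $8\mid E$ — is the crux of the argument.
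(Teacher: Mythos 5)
Your proposal has a genuine gap, and you name it yourself: everything hinges on producing, for \emph{every} admissible $\underline{w}$, an index $i$ with $8\nmid p_i$, and you only achieve this when $8\nmid E$. The computation you do give is correct — $p_{j+1}=2N_{j+1}=2^{k-1}+E$, so Proposition \ref{dectree}(ii) yields $m(\underline{w})\ge k-3$ whenever $8\nmid E$ — but the complementary case $8\mid E$ is nonempty (your own example $(1,1,1,1,1,1,1,1,1,3,7)$ witnesses this), and for it you offer only a programme (find a heavier ball $i$ with $C_i\not\equiv 0\pmod 4$ via Kummer/Lucas), not an argument. As it stands, nothing is proved for a general $\underline{w}$ satisfying the hypotheses.

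What makes the gap avoidable is that the tool you explicitly discard is exactly the paper's proof. You consider only reveals that delete a canceling pair of \emph{equal-weight} balls; the paper instead reveals, before any query is asked, that the weight-one ball $w_{j+1}$ and the heaviest ball $w_h\ge 2$ of index greater than $j+1$ have \emph{different} colors. This merges them into a single ball of weight $w_h-1\ge 1$, keeps $w_1,\dots,w_j$ intact, and turns the instance into one with $k-1>2^n+1$ balls and total weight $2^{n+1}+3-2=2^{n+1}+1$: precisely the hypotheses of Lemma \ref{suly2lemma}(ii). Hence the reduced instance requires $(k-1)-2=k-3$ queries, and since free information can only help the algorithm, $m(\underline{w})\ge k-3$. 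In your example this reveal produces $(1,1,1,1,1,1,1,1,3,6)$ — eight powers of two summing to $2^3$, total weight $17=2^4+1$, on $10>2^3+1$ balls — which is hard by Lemma \ref{suly2lemma}(ii), giving $m\ge 8=k-3$; so the reduction is available in exactly the configurations where you claim it is not (your obstruction only rules out reveals of equal pairs, which is not what is needed). The sole situation with no ball of weight at least $2$ after index $j$ is $w_i=1$ for all $i>j$, which the paper treats separately as a known case; there one has $2^n+3\ge 2^n+2$ balls of weight one, so Corollary \ref{suly1cor}(ii) with $s=1$ gives $m(\underline{w})\ge k-3$ directly.
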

\begin{proof}
If $w_i=1$ for every $i>j$, then we know that $m(\underline{w})=k-2$.
Otherwise, before the start of the algorithm, we can reveal that ball $j+1$ and the heaviest ball with index $>j+1$ have different colors, reducing the problem to \textbf{(ii)} of Lemma \ref{suly2lemma}.
\end{proof}

Combining Lemma \ref{suly1formalemma} and Corollary \ref{suly2cor} we obtain the following.

\begin{proposition}\label{O1G}
If $1\le w_1,\dots,w_j\le 2$ and 

\vspace{2mm}

$\sum_{i=1}^{j} w_i=2^{n}$,  $\sum_{i=1}^k w_i=2^{n+1}+3$ and $k>2^n+2$, then $m(\underline{w})\ge k-3$.
\end{proposition}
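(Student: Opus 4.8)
The plan is to argue by cases according to whether some weight among $w_{j+1},\dots,w_k$ equals $1$; since $m(\underline w)$ depends only on the multiset $\{w_1,\dots,w_k\}$, I may relabel freely. Suppose first that one of $w_{j+1},\dots,w_k$ is $1$, and relabel so that $w_{j+1}=1$. Then $w_1,\dots,w_j\in\{1,2\}$ are powers of two with $\sum_{i\le j}w_i=2^n$, while $\sum_{i\le k}w_i=2^{n+1}+3$ and $k>2^n+2$, so Corollary~\ref{suly2cor} applies verbatim and gives $m(\underline w)\ge k-3$.

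It remains to treat the case $w_i\ge 2$ for all $i>j$. Here I would reduce to a vector on which Lemma~\ref{suly1formalemma} applies cleanly, and then transport hardness back with Observation~\ref{obs}. Let $b$ be the number of $2$'s among $w_1,\dots,w_j$, and let $\underline w'$ be obtained from $\underline w$ by replacing each such $2$ with two $1$'s. Since $\sum_{i\le j}w_i=2^n$, the vector $\underline w'$ consists of exactly $2^n$ balls of weight $1$ together with the unchanged balls $w_{j+1},\dots,w_k$; its total weight is still $2^{n+1}+3$ and it has $k'=k+b>2^n+1$ balls. Applying Lemma~\ref{suly1formalemma}(ii) with the block of $2^n$ ones as the equal group, the inequality $2^{n+1}+3\le 2^{n+1}+2w'_J$ holds for every index $J$ outside the block precisely because each such $w'_J\ge 2$ (and it is strict for $J=k'$); hence $m(\underline w')=k'-2$, i.e.\ $\underline w'$ is hard.

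Now I would undo the splitting using Observation~\ref{obs}. Because $\underline w'$ is hard and contains two equal balls of weight $1$, merging such a pair into one ball of weight $2$ yields another hard vector; iterating this $b$ times (before each merge at least $a+2\ge 2$ balls of weight $1$ are present, where $a=2^n-2b$ is the number of original $1$'s) recovers exactly $\underline w$. Thus $\underline w$ is hard, and since its total weight $2^{n+1}+3$ is odd this means $m(\underline w)=k-2\ge k-3$.

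The essential difficulty, and the reason for the detour, is that in the second case neither packaged result applies to $\underline w$ directly: Corollary~\ref{suly2cor} is unavailable because no weight-$1$ ball lies outside a powers-of-two group summing to $2^n$ (such a group uses up all the $1$'s), while Lemma~\ref{suly1formalemma}(ii) wants an \emph{equal} group carrying about half the total weight, which the mixed $\{1,2\}$-block is not. Passing to the all-ones vector $\underline w'$ repairs both defects at once. The verifications that then require care are that the weight inequality of Lemma~\ref{suly1formalemma}(ii) holds for every index outside the block of ones — exactly where the assumption $w_i\ge 2$ $(i>j)$ and the precise total $2^{n+1}+3$ (rather than anything larger) enter — and that Observation~\ref{obs} is legitimately applicable at each of the $b$ merges. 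I note that this second case in fact delivers the stronger bound $m(\underline w)\ge k-2$; the weaker $k-3$ in the statement is forced only by the first case, through Corollary~\ref{suly2cor}.
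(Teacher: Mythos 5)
Your proof is correct and follows essentially the same route as the paper's: the identical case split on whether some $w_i=1$ exists with $i>j$, with Case~1 handled verbatim by Corollary~\ref{suly2cor} and Case~2 by Lemma~\ref{suly1formalemma}. The paper compresses Case~2 into the single phrase ``we can apply Lemma~\ref{suly1formalemma}'' (relying on its earlier remark that generalizations of that lemma via Observation~\ref{obs} hold, since the mixed $\{1,2\}$-block is not literally an equal block); your splitting of the $2$'s into pairs of $1$'s, applying Lemma~\ref{suly1formalemma}(ii), and merging back with Observation~\ref{obs} is exactly the intended argument spelled out, and it correctly yields the stronger bound $m(\underline{w})=k-2$ in that case.
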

\begin{proof}
If there is some $i>j$ such that $w_i=1$, we are done using Corollary \ref{suly2cor}.
Otherwise, for all $j<i\le k$ we have $w_i\ge 2$, and we can apply Lemma \ref{suly1formalemma}.
\end{proof}










The next subsection, contrary to its title, is not so relevant to our proof, but it helps to understand better what can happen before the final steps of an optimal algorithm that solves the majority problem.

\subsection{Relevant balls}
Given $\underline{w}=(w_1,\dots,w_k)$, we say that $w_i$ is \emph{relevant}\footnote{In \cite{GerKatPal13} the property that every ball is relevant was called \emph{non-slavery}.} if there is a coloring of the other balls such that the color of $w_i$ changes what the majority color is, or whether a majority color exists. In other words, there is a coloring of the other balls, such that either $w_i$ is red means red is majority and $w_i$ is blue means blue is majority, or one color of $w_i$ means there is no majority, the other color means there is majority.
In this subsection we prove some simple facts about relevant balls.
We start with some simple observations.


\begin{proposition}\label{rele1} \textbf{(i)} If $m(\underline{w})=0$, then either there is no relevant ball and the answer is that there is no majority, or there is one relevant ball and that is the majority ball.

\textbf{(ii)} If we obtain $\underline{w}'$ from $\underline{w}$ by any answer to a query $(a,b)$ such that $b$ is non-relevant, then $m(\underline{w}')=m(\underline{w})$.

\textbf{(iii)} If we increase the weight of a relevant ball, it remains relevant. In other words, if we obtain $\underline{w}'$ from $\underline{w}$ by replacing one ball $w_i$ with $w_i'$ such that $w_i'>w_i$, and $w_i$ is relevant in $\underline{w}$, then $w_i'$ is relevant in $\underline{w}'$.

\textbf{(iv)} For any $\underline{w}$ there is a threshold $t>0$ such that $w_i$ is relevant if and only if $w_i>t$.

\textbf{(v)} If a ball $x$ is relevant before a query $Q$ not containing $x$, there is at least one answer to $Q$ such that $x$ is still relevant afterwards. If a ball $x$ is relevant before a query $(x,y)$, then after the answer SAME the resulting ball with weight $x+y$ is relevant.

\textbf{(vi)} For any query $(a,b)$ there is an answer such that the number of relevant balls decreases by at most two.
\end{proposition}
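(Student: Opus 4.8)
The plan is to reduce the entire proposition to one combinatorial reformulation of relevance. For a multiset $S$ of weights write $\delta(S)=\min\{\,|\sum_{w\in S}\pm w|\,\}$ for the smallest imbalance obtainable by signing the elements of $S$. If the balls other than $i$ are colored so that their imbalance is $d$, then the two colors of $i$ produce total imbalances $d-w_i$ and $d+w_i$, and these give a different answer to the majority problem exactly when $|d|\le w_i$. This yields the characterization I would record first:
\begin{equation}
\text{$w_i$ is relevant}\iff w_i>0\ \text{ and }\ \delta(\underline{w}\setminus w_i)\le w_i. \tag{$\star$}
\end{equation}
From $(\star)$ parts (i), (iii) and (iv) follow quickly. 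For (iii), increasing $w_i$ leaves its complement, hence $\delta(\underline{w}\setminus w_i)$, unchanged, so $(\star)$ is preserved. For (iv) I would first prove that relevance depends only on the weight value: if $w_i\le w_{i'}$ and $i$ is relevant, so is $i'$. Writing $\Sigma$ for the signed sums of $\underline{w}\setminus\{w_i,w_{i'}\}$, one has $\delta(\underline{w}\setminus w_i)=\min_{\sigma\in\Sigma}\bigl|\,|\sigma|-w_{i'}\bigr|$ and $\delta(\underline{w}\setminus w_{i'})=\min_{\sigma\in\Sigma}\bigl|\,|\sigma|-w_i\bigr|$, and a $\sigma$ witnessing $\bigl|\,|\sigma|-w_{i'}\bigr|\le w_i$ also witnesses $|\sigma|\le w_i+w_{i'}$, giving $\delta(\underline{w}\setminus w_{i'})\le w_{i'}$. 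Thus the relevant weights form an up-set; since weight-$0$ balls are never relevant and no non-relevant weight can exceed a relevant one, a threshold $t>0$ with strict inequality exists. For (i), $m(\underline{w})=0$ means some fixed answer is correct for every coloring: ``no majority'' is correct for all colorings only if $\sum\pm w_i=0$ identically, forcing all weights $0$ (no relevant ball), whereas ``$x$ is a majority ball'' is correct for all colorings iff $w_x>\tfrac12\sum w_i$, and then $(\star)$ makes $x$ the unique relevant ball.

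The two halves of (v) are the technical core, and I would derive them from the fact that signing $\{a,b\}$ realizes exactly $\pm(a+b)$ and $\pm|a-b|$, so for every $R'$
\[
\delta(\{a,b\}\cup R')=\min\bigl(\delta(\{a+b\}\cup R'),\ \delta(\{|a-b|\}\cup R')\bigr).
\]
For (v)(a), take $R'=\underline{w}\setminus\{x,a,b\}$: the left-hand side is $\le w_x$ because $x$ is relevant, so at least one of the post-\textsc{same} and post-\textsc{diff} values of $\delta$ for the complement of $x$ is $\le w_x$, and that answer keeps $x$ relevant. For the \textsc{same}-merge in (v)(b), relevance of $x$ gives $\sigma\in\Sigma_R$ and a sign with $|\sigma\pm y|\le x$, whence $\delta(R)\le|\sigma|\le x+y$ by the triangle inequality, so the merged ball $x+y$ is relevant.

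Part (vi) I would obtain from (v)(a) and the threshold (iv), the point being to pick a \emph{single} answer that protects all relevant balls at once. After merging $a,b$, no ball other than the merge can gain relevance, since merging only shrinks the set of achievable imbalances and so every $\delta(\cdot)$ can only grow; hence the only possible losses are among previously relevant balls. Let $c$ be a relevant ball of minimum weight among those different from $a,b$ (if there is none, at most two relevant balls existed and we are done). By (v)(a) some answer keeps $c$ relevant, so by (iv) applied to the resulting instance its threshold lies below $w_c$; as every relevant ball other than $a,b$ has weight at least $w_c$ and is still present, all of them stay relevant. Thus only $a$ and $b$ can be lost, a decrease of at most two.

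For (ii) I would argue that querying a non-relevant ball is complexity-neutral. By $(\star)$, $b$ is non-relevant precisely when $|sa+\sigma|>b$ for every sign $s$ and every signed sum $\sigma$ of $\underline{w}\setminus\{a,b\}$. Since $b$ is then smaller than $|sa+\sigma|$, adding $\pm b$ never changes the sign of $sa+\sigma$, so the majority-answer function of each merged instance coincides, after the relabeling $s_{a\pm b}\mapsto\pm s_a$, with that of $\underline{w}$ with the coordinate $b$ deleted; as $m(\underline{w})$ depends only on this answer function and queries touching the function-irrelevant coordinate $b$ can never advance a certificate, all three instances share the same value. I expect this to be the main obstacle: unlike the other parts it compares complexities of instances whose weight vectors genuinely differ (replacing $a$ by $a\pm b$), so the combinatorial characterization $(\star)$ alone does not suffice and one must pass to the decision-tree description of $m$ and justify carefully that a query on a coordinate the answer ignores is useless. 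A secondary subtlety, already handled above, is that in (vi) the ``good'' answer may differ from ball to ball, which is why one protects only the lightest relevant ball off $\{a,b\}$ and then invokes monotonicity.
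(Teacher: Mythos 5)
Your proposal is correct, and most of it is the paper's own argument in different clothing: your characterization $(\star)$ is exactly the paper's witnessing-coloring definition made quantitative (a coloring of the complement of $w_i$ with imbalance at most $w_i$ is the same datum as a signed sum certifying $\delta(\underline{w}\setminus w_i)\le w_i$), and your proofs of (i), (iii), (iv), (v), (vi) --- the exchange/monotonicity argument for (iv), answering so as to protect the lightest relevant ball outside the query and then invoking monotonicity for (vi) --- coincide step for step with the paper's, just phrased in terms of $\delta$ rather than explicit colorings. The genuine divergence is in (ii): the paper translates algorithms directly between $\underline{w}$ and $\underline{w}'$ (replace queries to the ball containing $a$ by queries to the ball containing $c$, ``ignore'' queries involving $b$), whereas you factor both merged instances through the deleted instance $\underline{w}\setminus b$ and use the invariance of $m$ under equality of the majority-color functions; that invariance is indeed true and essentially free (the valid outputs --- ``no majority'', or any ball whose sign agrees with the majority sign --- are determined by that function alone), and it cleanly yields $m(\underline{w}')=m(\underline{w}\setminus b)$ for both answers. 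What you then assert, that queries touching $b$ never help, i.e.\ $m(\underline{w})=m(\underline{w}\setminus b)$, is the same simulation the paper also leaves implicit: to make it rigorous one answers the first query $(b,z)$ with SAME to create a virtual link, replaces any later query $(b,z')$ whose answer is undetermined by an actual query attaching $z'$ to that link (so the query count does not grow), and translates a final output of ``$b$'' into its SAME-linked ball; your write-up is no less complete than the paper's at this point, and you correctly identify it as the delicate step. Two small cautions: your ``precisely when'' reformulation of non-relevance in (ii) fails for $w_b=0$ (such a ball is non-relevant even though $\delta(\underline{w}\setminus b)>0$ may fail), though the sign-preservation argument is vacuous in that case; and in (iv) you should state the threshold explicitly, e.g.\ the largest non-relevant weight, or any $t\in(0,1)$ when every ball is relevant, which integrality of the weights permits.
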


\begin{proof} To prove \textbf{(i)}, observe that if we color all the balls blue, there is a majority, unless all the balls have zero weight, in which case there is no relevant ball. Thus if there is no majority in any coloring of $\underline{w}$, then there cannot be relevant balls. If there is a majority ball $a$, then it has to be the only relevant ball. Indeed, if $b$ is relevant, then changing the color of $b$ must change the answer, unless $b$ was the answer.

To prove \textbf{(ii)}, let $c$ be the ball that the answer to the query $(a,b)$ gives, thus it has weight either $a+b$ or $|a-b|$. Any algorithm that gives a solution for $\underline{w}$ gives a solution for $\underline{w}'$, where asking a ball that contains $a$ is replaced by the query that contains $c$, and ignoring queries that involve $b$. A similar argument shows that a solution for $\underline{w}'$ gives a solution for $\underline{w}$.
Therefore, $m(\underline{w}')=m(\underline{w})$.

To prove \textbf{(iii)}, assume for a contradiction that $w_i'$ is not relevant and take a coloring of the other balls that shows this.
But then taking the same coloring for the other balls also shows that $w_i$ is not relevant, a contradiction.

To prove \textbf{(iv)}, observe first that it is equivalent to the statement that a non-relevant ball cannot have larger or equal  weight than a relevant ball. Indeed, this is obviously implied by \textbf{(iv)}, and if this holds, than $t$ can be chosen as the largest weight of a non-relevant ball. Now assume $a$ is relevant and $w(b)\ge w(a)$. Then consider the coloring that shows $a$ is relevant, and exchange the color of $b$ and the color of $a$. This coloring clearly shows $b$ must be also relevant. 

To prove \textbf{(v)}, assume $x$ is not in the query and consider a coloring of the other balls such that the color of $x$ decides the majority. In that coloring the balls in the query have different or same color; answer accordingly. Then the same coloring shows $x$ is still relevant. 

Assume now $x$ is in the query $(x,y)$ and the answer is SAME.
Consider a coloring of the other balls such that the color of $x$ decides the majority.
Taking the same coloring, the new ball $x+y$ will decide majority.

To prove \textbf{(vi)}, consider the relevant ball $x$ not in the query with the smallest weight. By \textbf{(v)} there is an answer such that $x$ remains relevant.
As other relevant balls have larger weight, they also remain relevant, except for $a$ and $b$ (whose total weight can go below the weight of $x$).
\end{proof}

We remark that \textbf{(vi)} of the above proposition gives a new proof of a proposition from \cite{GerKatPal13}, which states that if all the $n$ balls are relevant, we need at least $\lfloor n/2\rfloor$ queries. 

\begin{proposition}
Before the last query of an optimal algorithm, there are either two relevant balls, they are of equal weight and there are no other balls with non-zero weight, or there are three relevant balls, and any query that compares two of them finishes the algorithm.
\end{proposition}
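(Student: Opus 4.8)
The plan is to study the state $\underline w$ present just before the final query. Since the algorithm is optimal and stops after this query for \emph{both} answers, we have $m(\underline w)=1$ and the query is a \emph{finishing} query: both the SAME-answer and the DIFF-answer lead to states with $m=0$. The backbone of the argument is the elementary characterization that for $\underline w$ with $W:=\sum_i w_i>0$ one has $m(\underline w)=0$ if and only if $\max_i w_i>W/2$ (one ball outweighs all the others), while if all weights are $0$ there is no majority and again $m=0$. I would first record this, reading it off Proposition~\ref{rele1}(i): a state with $m=0$ has at most one relevant ball, and a single relevant ball must dominate.

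Next I would pin down the finishing query. By Proposition~\ref{rele1}(ii), querying a non-relevant ball cannot change $m$, so a finishing query must compare two \emph{relevant} balls. Let $R$ be the number of relevant balls; then $R\ge 2$, since otherwise $m=0$ by Proposition~\ref{rele1}(i). To bound $R$ from above, take the smallest relevant ball $z$ lying outside the query: by Proposition~\ref{rele1}(v) some answer keeps $z$ relevant, and by the threshold monotonicity of Proposition~\ref{rele1}(iv) that same answer keeps \emph{every} relevant ball outside the query relevant (this is precisely the mechanism of Proposition~\ref{rele1}(vi)). As that answer still finishes, the resulting state has at most one relevant ball, forcing $R-2\le 1$. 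Hence $R\in\{2,3\}$, and the finishing query is a pair of relevant balls.

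I would then compute, once and for all, when a query of two relevant balls $a\ge b$ finishes. Because every ball satisfies $w_i\le W/2$ in an $m\ge1$ state, the SAME-answer (total still $W$) finishes iff the merged ball beats everything, i.e. $a+b>W/2$; and in the DIFF-answer (total $W-2b$) the new ball $a-b\le W/2-b$ can never strictly dominate, so it finishes iff the largest remaining ball exceeds $W/2-b$, or the whole state becomes all-zero. For $R=2$ the only relevant balls are the queried pair $u\ge v$, so the ``remaining'' balls are all non-relevant, of total weight $S$. Here lies the main obstacle: I must rule out the DIFF-answer finishing through a dominating leftover ball. The key computation is that if some leftover $w_j$ satisfied $w_j>W/2-v$, then colouring $u,v$ oppositely and sending every other small ball to the opposite side produces a signed sum $N$ of the balls other than $j$ with $|N|\le w_j$ (using $u-v\le S$, which comes from $m\ge1$, together with $w_j>W/2-v$); this makes $w_j$ relevant, contradicting $R=2$. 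Thus the DIFF-answer finishes only via all-zero, which forces $u=v$ and all other weights $0$ — exactly the first alternative.

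Finally, for $R=3$ with relevant balls $a\ge b\ge c$, the third relevant ball is always the largest leftover ball, so the finishing test for a pair simplifies cleanly. Writing $P_{xy}$ for the condition $x+y>W/2$, the criteria above show that querying $(a,b)$ finishes iff $P_{ab}\wedge P_{bc}$, querying $(a,c)$ iff $P_{ac}\wedge P_{bc}$, and querying $(b,c)$ iff $P_{bc}\wedge P_{ac}$. Since $a\ge b\ge c$ gives $a+b\ge a+c\ge b+c$, the condition $P_{bc}$ implies both $P_{ac}$ and $P_{ab}$, so each of the three pairwise queries finishes \emph{if and only if} $P_{bc}$ holds. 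As the algorithm's actual query is one of these three and it finishes, $P_{bc}$ must hold; hence all three pairwise queries finish, giving the second alternative. The only genuinely delicate point is the relevance computation in the $R=2$ case; everything else is bookkeeping with the $m=0$ characterization.
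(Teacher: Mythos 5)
Your proof is correct, and it takes a genuinely different route from the paper's after the shared opening (both obtain the bound of at most three relevant balls from Proposition~\ref{rele1}, and both use the greedy ``add each ball to the lighter class'' colouring to force an extra relevant ball). The paper argues bottom-up from relevance alone: in the two-ball case it shows that unequal weights, or positive leftover weight, would create a third relevant ball; in the three-ball case it derives $x+m<{}$(sum of the other two relevant weights) for each relevant $x$ from non-relevance of the leftovers together with non-solvedness, so that either answer to any pairwise query yields a dominating ball. You argue top-down from the characterization of solved states ($\max_i w_i>W/2$ or all weights zero): this yields exact finishing criteria for the three pairwise queries, and the monotonicity $P_{bc}\Rightarrow P_{ac}\Rightarrow P_{ab}$ collapses all three criteria to $P_{bc}$, so the finishing of the one query the optimal algorithm actually asks propagates to the other two. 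The trade-off: the paper's version proves a bit more (any unsolved state with exactly three relevant balls and positive leftover weight has \emph{every} pairwise query finishing, without assuming that any query finishes), whereas yours uses the hypothesis $m(\underline{w})=1$ essentially; in exchange, yours treats the leftover-weight-zero subcase uniformly, which is precisely where the paper's write-up is imprecise --- it claims the query $(a,b)$ finishes ``unless both $a+b$ and $|a-b|$ are equal to $c$,'' though either single equality already prevents finishing, and excluding a single equality needs exactly your appeal to the existence of some finishing query. Two details to make explicit in a final write-up: in the $R=2$ computation, take $w_j$ to be the \emph{largest} leftover, so that every ball fed to the greedy colouring has weight at most $w_j$ and the bound $|N|\le w_j$ really holds (your phrase ``every other small ball'' gestures at this but should be stated); and your citation of Proposition~\ref{rele1}(i) for ``at most one relevant ball implies $m=0$'' is its (true but unstated) converse --- harmless here, since your own observation that a finishing query must compare two relevant balls already gives $R\ge 2$.
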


\begin{proof} 


Proposition \ref{rele1} implies that there are at most three relevant balls. Observe that if there are two relevant balls $a$ and $b$ in $\underline{w}$, they must have the same weight. Indeed, if $w(a)>w(b)$ and the total weight $m$ of the other balls is smaller than $w(a)-w(b)$, then $a$ is the only relevant ball. If $m\ge w(a)-w(b)$, then color $b$ red, $a$ blue and go through the other balls in increasing order of their weight, without the last ball $c$. We give each of them the color which has the smaller weight at that point. The first ball gets the color red, but as $m\ge w(a)-w(b)$, at one point the total weight of red balls becomes at least the total weight of blue balls. From that point, the difference between the classes is at most the weight of the current ball, which is at most $w(c)$. This coloring shows $c$ is relevant.

It is left to show that if there are exactly three relevant balls, $a$, $b$ and $c$, querying any two of them (say $a$ and $b$) finishes the algorithm. Let $m$ be the sum of the weights of the other balls. If $m=0$, we are done unless both $a+b$ and $|a-b|$ are equal to $c$, which means $b=0$, but a ball with zero weight cannot be relevant. Thus we can assume $m>0$. We have $a\le b+c+m$, otherwise we are done (which contradicts our assumption that we are before the last query). But we also have $a+m\le b+c$, because the other balls are not relevant. Moreover, if $a+m=b+c$, then again, some of the other balls would be relevant, thus we have $a+m<b+c$. This implies $c> a-b+m$, thus we are done if the answer is DIFF, as $c$ is a majority ball. We also have $a+b+m\ge c$, otherwise we are done without the last query, and it implies $a+b\ge c+m$, moreover $a+b>c+m$, otherwise some of the remaining balls are relevant. Thus we are done if the answer is SAME.
\end{proof}

\section{Graphs}\label{sec:graphs}

Let us start this section with describing in detail how the weighted majority problems are connected to the majority problem on graphs. Consider an algorithm solving the majority problem on a graph $G$. Let $G^i$ be the subgraph of $G$ formed by the first $i$ queries. We call the vertex set of a connected component of $G^i$ a \emph{$q$-component}. Observe that knowing the answer to the first $i$ queries, for every $q$-component $U$ we know a partition of $U$ into a blue subset $U_1$ and a red subset $U_2$. Let the weight of $U$ be $w^i(U)=\big||U_1|-|U_2|\big|$. If the $i+1^{\mathrm{st}}$ query is $(u,u')$ with $u\in U$ and $u'\in U'$, where $U$ and $U'$ are $q$-components in $G^i$, then $U$ and $U'$ are merged into a $q$-component with vertex set $U\cup U'$ in $G^{i+1}$. Moreover, its weight $w^{i+1}(U\cup U')$ is either $w^i(U)+w^i(U')$, or $|w^i(U)-w^i(U')|$, depending on the answer to the $i+1^{\mathrm{st}}$ query $(u,u')$. For other $q$-components $U''$ we have $w^{i+1}(U'')=w^i(U'')$. Hence an algorithm that finishes solving the majority problem on $G$ after the $k^{\mathrm{th}}$ query also solves the weighted majority problem for the vector having the weights of the $q$-components of $G^k$ as coordinates. However, this does not work in the other direction, as we do not have the restriction of the graph structure in the weighted problem. Thus we can only prove upper bounds for $m(G)$ this way. 

We will omit $i$ and simply talk about $w(U)$ instead of $w^i(U)$ because $i$ will be always clear from the context. 
For a ball $u\in U$, let $w(u):=w(U)$. If a $q$-component $X$ has weight zero, we say that $X$ is \emph{balanced}.
Similarly to vectors, we say that a graph $G$ on $n$ vertices is \emph{hard} if $m(G)=n-1$ for even $n$ and $m(G)=n-2$ for odd $n$.

\begin{proposition}\label{eventrees} Every tree $T$ on an even number $n$ of vertices is hard, i.e., $m(T)=n-1$.
\end{proposition}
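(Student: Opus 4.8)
The plan is to prove the two inequalities separately; the upper bound $m(T)\le n-1$ is immediate, since querying all $n-1$ edges of $T$ reveals the whole coloring. The content is the matching lower bound, which I would obtain by an adversary argument built on the reduction of Section~\ref{sec:graphs}. First I would record the structural fact special to trees: every edge of $T$ is a bridge, so whenever we query a tree edge $(u,u')$ its endpoints lie in two distinct $q$-components (adding a tree edge never closes a cycle), and the query merges them into one. Hence each query decreases the number of $q$-components by exactly one: after $i$ queries there are $n-i$ of them, and after $n-2$ queries exactly two remain. Moreover, since the two merged $q$-components were previously unrelated, their relative coloring is still free, so the adversary may declare the weight of the merged component to be either $w(U)+w(U')$ or $|w(U)-w(U')|$, whichever it prefers.

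Next I would characterize exactly when the algorithm is allowed to stop. Viewing the residual instance as the weighted problem on the vector of current $q$-component weights, Proposition~\ref{rele1} (in particular the relevance threshold of part \textbf{(iv)} together with part \textbf{(i)}) shows that the majority is already determined, i.e.\ no further query is needed, precisely when either all component weights are $0$, or a single weight strictly exceeds the sum of all the others. Consequently, to force at least $n-1$ queries it suffices for the adversary to maintain the following invariant after each of the first $n-2$ queries: not all weights are zero, and no component weight exceeds the sum of the remaining ones. I would also note the parity bookkeeping that $S:=\sum_U w(U)$ starts at the even value $n$ and changes only by even amounts under both merge options, so $S$ stays even throughout; this integrality will be convenient. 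When the invariant survives to the $(n-2)$-nd query, the two surviving components must have equal positive weight, which is exactly the configuration that still requires the final $(n-1)$-st query.

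The heart of the proof, and the step I expect to be the main obstacle, is showing that the adversary can always maintain this invariant. Given the invariant and a queried edge joining components of weights $a\ge b$, the adversary must choose between the new weight $a+b$ (which keeps $S$ fixed but may create a dominant component) and $|a-b|=a-b$ (which shrinks $S$ and risks either letting some other component dominate or collapsing everything to zero). A purely weight-theoretic version of this game is \emph{not} winnable: configurations such as two heaviest components of equal weight together with one lighter component are traps, since merging the two heavy ones makes both answers fatal. The way out, and the reason the statement is true, is to exploit the quotient-tree structure that the weight game ignores: the components form a tree (contract the queried subforest of $T$), and the algorithm can only merge components that are \emph{adjacent} in this quotient tree. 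In particular two heavy components separated by a lighter one cannot be attacked directly, so the adversary is never actually forced into the trap.

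I would therefore design the invariant to incorporate the quotient tree, for instance tracking the heaviest component and using the difference-option to keep its weight at most $S/2$ whenever it is the one being enlarged, while using the sum-option elsewhere, and arguing that any near-extremal component is insulated from the merge by lighter quotient-tree neighbours. The bulk of the work is then a case analysis over which adjacent components the algorithm chooses to merge, verifying in each case that at least one of the two answers preserves both the no-dominant-component condition and positivity of $S$ (using that $S$ is even), and checking that the process terminates at two equal-weight components. I expect this case analysis, together with formulating an invariant strong enough to be preserved yet weak enough to imply ``not determined'', to be the crux of the argument.
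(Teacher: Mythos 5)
Your framework is sound up to a point: the upper bound, the fact that in a tree every query merges two distinct $q$-components, the stopping criterion (determined iff all weights are zero or one weight strictly exceeds the sum of the rest), the parity of the total weight, and the observation that the pure weight game is \emph{not} adversary-winnable (consistent with $m(K_n)=n-b(n)<n-1$ for even $n$ that is not a power of two) are all correct. But the proof then stops exactly at what you yourself call the crux: no invariant is ever formulated and no case analysis is ever carried out, and that is the entire content of the theorem. Worse, the sketch you offer is misdirected: the adversary does not control the quotient-tree adjacencies --- the algorithm does, by choosing which edges of $T$ to query --- so ``near-extremal components insulated by lighter neighbours'' is not something the adversary can arrange, and ``keep the heaviest at most $S/2$'' does not by itself avoid your own trap (nothing prevents the algorithm from building two adjacent components of equal large weight next to a lighter third). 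A proof must specify an answer rule for every query in every reachable configuration and verify that it preserves a property blocking both stopping conditions; that content is missing.

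The missing idea is to tie the invariant not to comparisons between weights but to \emph{intrinsic} data of each component. This is what the paper's Lemma~\ref{treelemma} does (it is stated as a second proof of Proposition~\ref{eventrees}): the adversary answers so that every proper $q$-component $X$ satisfies $w(X)=1$ if $|X|$ is odd and $w(X)=2\delta(X)$ if $|X|$ is even, where $\delta(X)$ is the parity of the number of edges between $X$ and the rest. Maintainability is then a three-case check on the parities of $|X|$ and $|Y|$, using $\delta(X\cup Y)\equiv\delta(X)+\delta(Y)\pmod 2$; and the game cannot stop while two or more components remain, because every leaf of the quotient tree has $\delta=1$, hence is unbalanced, and with all weights in $\{0,1,2\}$ and even total weight, the presence of two unbalanced components rules out both ``all zero'' and ``one component dominates''. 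That parity-based rule is precisely the invariant your plan needs and does not supply. Note also that the paper's primary proof is different and simpler still: the adversary commits \emph{in advance} to a single balanced coloring in which every edge of $T$ separates two unbalanced halves (such a coloring exists by a local switching argument), so that any unqueried edge leaves both ``no majority'' and ``majority exists'' consistent with all answers. That argument is non-adaptive and hence yields the stronger statement $m_{nd}(T)=n-1$, which your adaptive scheme, even if completed, would not give.
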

\begin{proof}
 We show more: the adversary can pick in advance a coloring $c$ of the vertices such that no matter what edge is missing from the queries, we cannot find out if there is a majority or not. All this coloring needs to satisfy is that we have $n/2$ blue balls, and if we remove any edge from $T$, both the resulting subtrees are unbalanced, i.e., the number of blue and red balls is not the same in them. Indeed, if an edge of $T$ is not asked, then the adversary can either claim that the real coloring of the vertices is $c$ and thus no majority vertex exists, or the coloring coincides with $c$ on one component, but is exactly the flipped version of $c$ on the other component and thus there is majority.
 
 Equivalently, we want to find a balanced 2-coloring such that each edge of $T$ cuts it into two non-balanced parts. We start with an arbitrary balanced coloring of $T$. If an edge connecting a red ball $u$ and a blue ball $v$ cuts $T$ into two balanced parts, we can simply change the color of $u$ to blue and the color of $v$ to red. Observe that any other edge $e$ cuts $T$ into two parts such that $u$ and $v$ belong to the same part, hence it does not change whether $e$ cuts $T$ into balanced parts.
 
 Let us assume now that $u$ and $v$ are both red, and the edge $uv$ cuts $T$ into two balanced parts $A$ and $A'$. Then we change the color of every ball in $A$. We claim that for any edge $e$ that cuts $T$ into parts $B$ and $B'$, it does not change whether $B$ and $B'$ are balanced. Indeed, either $B$ is completely inside $A'$, in which case no color inside $B$ is changed, or $B$ contains $A$, in which case some colors have changed, but the number of blue balls turning red is the same as the number of red balls turning blue, as $A$ is balanced. As $B'$ is balanced if and only if $B$ is balanced, $B'$ is also unaffected. Now $u$ and $v$ have different colors, so we can again exchange their color.
 
Hence we obtained that we can decrease the number of edges that cut $T$ into balanced parts. It is easy to see that the coloring remains balanced. After applying this operation finitely many times we obtain the desired coloring, finishing the proof.
\end{proof}

Surprisingly, it is much harder to give a lower bound for trees on an odd number of vertices.
For paths, for example, we have $m(P_n)=n-b(n)$ for all odd $n\le 13$, while $m(P_{15})=12=n-b(n)+1=n-3$.
(This we have verified with a computer program.)
We conjecture that $n-3$ might be a lower bound for all trees, but we can only prove the weaker bound $n-65$.


To prove the lower bound of $n-65$ for odd $n$, we start with a lemma that gives another proof for Proposition \ref{eventrees}.
First, we introduce a notation.
In a graph $G$, for a subset of its vertices $X\subset V$ we denote by $\delta(X)$ the \emph{parity} of the number of edges between $X$ and $V\setminus X$.
If $G$ is a tree and $X$ is a connected subset of vertices, then $\delta(X)$ equals the parity of the number of components of $V\setminus X$. Recall that the weight of a $q$-component $X$, denoted by $w(X)$ is the difference between the number of the blue balls and the number of the red balls in it.

\begin{lemma}\label{treelemma}
    We can answer to any sequence of queries in any graph $G$ such that for any $q$-component $X\subsetneq V$ we have $0\le w(X)\le 2$, and\\
\textbf{(i)} if $|X|$ is odd, $w(X)=1$, \\
\textbf{(ii)} if $|X|$ is even, $w(X)=2\delta(X)$.
\end{lemma}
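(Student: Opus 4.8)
The plan is to design an explicit adversary strategy maintaining the invariant by induction on the number of queries, so that after answering each query every proper $q$-component $X \subsetneq V$ satisfies $0 \le w(X) \le 2$ together with the parity conditions \textbf{(i)} and \textbf{(ii)}. The base case is clear: before any query every vertex is its own $q$-component, each of odd size $1$ with $w=1$, which is consistent with \textbf{(i)}. The inductive step is where the real work lies: given a query $(u,u')$ with $u \in U$, $u' \in U'$, I must choose an answer (SAME or DIFF) so that the merged component $U \cup U'$ (and all unchanged components) still satisfy the invariant. Since for other components nothing changes, the whole problem reduces to verifying that for the \emph{new} component $U \cup U'$ there is a valid answer.

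First I would split into the case $U = U'$ (the query is within one $q$-component, so it is redundant and $w$ does not change — here I only need to note that $|X|$ and $\delta(X)$ are unaffected) and the case $U \ne U'$. In the latter case, recall that answering SAME sets the new weight to $w(U)+w(U')$ while DIFF sets it to $|w(U)-w(U')|$, and the new size is $|U|+|U'|$. The parity of $|U \cup U'|$ is determined, so I know which of \textbf{(i)} or \textbf{(ii)} I must satisfy. The key observation I would exploit is that merging two components across an edge changes $\delta$ in a controlled way: in a general graph, $\delta(U \cup U') \equiv \delta(U) + \delta(U') + (\text{parity of edges between } U \text{ and } U')$, but I expect the relevant accounting to be cleanest if I track, for each component, the pair $(|X| \bmod 2, \delta(X))$ and show the four weight-parity classes are closed under the merge operation with an appropriate choice of answer.

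The heart of the argument is a short case analysis on the parities $(|U|\bmod 2, |U'|\bmod 2)$, i.e. four cases. For instance, if both $|U|,|U'|$ are odd, then by \textbf{(i)} we have $w(U)=w(U')=1$, the merged size is even, so I must achieve $w(U\cup U') = 2\delta(U\cup U') \in \{0,2\}$; answering SAME gives weight $2$ and DIFF gives weight $0$, and I must check that exactly the answer matching $\delta(U \cup U')$ is available and produces a value in $\{0,2\}$ — which it does, since both options $0$ and $2$ are attainable and both lie in the allowed range. The odd/even and even/even cases are analogous but require verifying that the forced value (e.g.\ $w \in \{1\}$ when the union has odd size, or $w = 2\delta$ when even) is reachable from $\{w(U)+w(U'), |w(U)-w(U')|\}$ given the possible input weights $\{0,1,2\}$. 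The main obstacle will be the even/even case, where inputs can be $0$ or $2$: I need $w(U \cup U') \in \{0,2\}$ to match $2\delta(U \cup U')$, and the sums/differences of $\{0,2\}$ give $\{0,2,4\}$, so I must rule out the value $4$ by showing that the parity of $\delta$ forces the correct choice and that the constraint $w \le 2$ is never violated. I anticipate this requires carefully relating $\delta(U \cup U')$ to $\delta(U), \delta(U')$ and the edge parities so that the mandated answer always yields a weight in $\{0,2\}$ and never $4$; getting this parity bookkeeping exactly right, uniformly over arbitrary graphs $G$, is the step I expect to be most delicate.
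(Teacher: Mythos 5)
Your overall strategy coincides with the paper's: an adversary maintains the invariant by induction, and the inductive step is a case analysis on the parities of $|U|$ and $|U'|$. The base case, the redundant-query case, the odd/odd case, and the odd/even case are all fine (in the odd/even case the even component has weight $0$ or $2$, and with the other weight equal to $1$, answering DIFF always yields $1$). But there is a genuine gap at exactly the point you flag as delicate: the parity identity you propose to use is wrong, and the decisive even/even case is left unverified. You wrote $\delta(U\cup U')\equiv \delta(U)+\delta(U')+(\text{parity of } |E(U,U')|)$; in fact there is no correction term. Each edge between $U$ and $U'$ is counted once in $\delta(U)$ and once in $\delta(U')$ but not at all in $\delta(U\cup U')$, so
\[
\delta(U\cup U')=\delta(U)+\delta(U')-2|E(U,U')|\equiv \delta(U)+\delta(U')\pmod 2 .
\]
This matters: with your version of the identity, take $\delta(U)=\delta(U')=1$ (hence $w(U)=w(U')=2$ by the invariant) and $|E(U,U')|$ odd; then you would be forced to achieve $w(U\cup U')=2\delta(U\cup U')=2$, while the only attainable weights are $w(U)+w(U')=4$ and $|w(U)-w(U')|=0$. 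Under your bookkeeping the invariant is therefore not maintainable, i.e., the even/even case cannot be closed as you set it up.

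With the correct identity the case closes immediately, and this is exactly how the paper finishes: since $w(U)=2\delta(U)$ and $w(U')=2\delta(U')$, one gets $2\delta(U\cup U')\equiv w(U)+w(U')\pmod 4$, so the mandated weight is $0$, $2$, $0$ according as $w(U)+w(U')$ is $0$, $2$, $4$; answering either way in the first two cases and DIFF in the last always produces the required value, and the value $4$ is never forced. So your proposal needs only this one correction, but it is the correction on which the whole lemma turns.
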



Note that if $T$ is a tree on an even number of vertices, then $0\le w(X)\le 2$ implies that the game goes on until we have at most one non-balanced $q$-component.
Assume at that point there would also be some balanced $q$-components. Observe that there is a tree-structure on the $q$-components of a tree. Then at least one of the balanced $q$-components would be a leaf-component, but that contradicts condition \textbf{(ii)}.
Thus there can be only one component, which implies Proposition \ref{eventrees}.

For trees on an odd number of vertices, a similar argument cannot work, as for example in $P_n$, it can happen that the first two vertices form a $q$-component of weight $2$, followed by $(n-3)/2$ pairs of vertices that each form a balanced $q$-component, and the last vertex is a $q$-component of weight $1$.
In this case we have solved the majority problem with only $(n-1)/2$ queries.
In fact, according to the conditions of Lemma \ref{treelemma}, our algorithm would answer exactly so that it would produce such weights for the $q$-components.
For paths, there is no way to keep the weight function bounded without allowing an arbitrarily number of adjacent balanced $q$-components; but if this happened, then we could merge all the $q$-components to their left, and all the $q$-components to their right, so that only two non-balanced $q$-components remain - after this we are done if $n$ is odd, saving an arbitrarily large number of queries.
This is why the proof will be more complicated for trees on an odd number of vertices; we will need to use our results about weighted balls.

\begin{proof}[Proof of Lemma \ref{treelemma}]

    
    
    
    
    
    Initially the conditions are satisfied.
    Suppose that the query is between two $q$-components, $X$ and $Y$.
    
    If $|X|+|Y|$ is odd, then exactly one of $w(X)$ and $w(Y)$ equals $1$, while the other equals $0$ or $2$, so we can achieve $w(X\cup Y)=1$ to satisfy condition \textbf{(i)}.
    
    If $|X|$ and $|Y|$ are both odd, then we can choose the weight of $X\cup Y$ to be 0 or 2; one of those is equal to $2\delta(X)$.
    
    If $|X|$ and $|Y|$ are both even, then since $\delta(X\cup Y)=\delta(X)+\delta(Y)-2|E(X,Y)|=\delta(X)+\delta(Y) \bmod 2$, we need $w(X\cup Y)=w(X)+w(Y) \bmod 4$. Observe that $w(X)+w(Y)$ is 0, 2 or 4. Thus we can answer so that $w(X\cup Y)$ becomes 0, 2 or 0, respectively, to satisfy condition \textbf{(ii)}.
\end{proof}

For the lower bound of $n-65$ for trees on an odd number of vertices, we need another theorem. Before that, we prove a simpler result that contains an important ingredient of the proof, and is of independent interest.

\begin{theorem}\label{lefogo1} Let $n=2^k+l$, where $l<2^k$. If $G$ has a set $U$ of vertices such that $|U|\le 2^{k-2}$ and the components of $G\setminus U$ are single vertices (i.e., every edge is incident to a vertex in $U$), then $G$ is hard, i.e., $m(G)=n-1$ if $n$ is even and $m(G)=n-2$ if $n$ is odd.
\end{theorem}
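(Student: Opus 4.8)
The plan is to reduce the lower bound to the weighted results of the previous section. As explained at the start of this section, running any algorithm on $G$ and stopping after $t$ queries, the $q$-components carry a weight vector $\underline w_t$ with $c_t=n-t$ coordinates, and every further query merges two coordinates exactly as a query does in the weighted problem. Since in $G$ we may only merge along edges, the number of queries still needed from this state is at least $m(\underline w_t)$, the cost of the \emph{unrestricted} weighted problem on $\underline w_t$. Hence for the total number of queries $Q$ we obtain the master inequality
\[
Q\ \ge\ t+m(\underline w_t)\qquad\text{at every reached state.}
\]
So it suffices to design an answering strategy for the adversary which, against every algorithm, reaches a state with $m(\underline w_t)\ge c_t-1$ when $n$ is even (resp. $m(\underline w_t)\ge c_t-2$ when $n$ is odd); then $Q\ge (n-c_t)+(c_t-1)=n-1$ (resp. $n-2$), which together with the trivial upper bounds gives equality.

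The key structural observation is that, $U$ being a vertex cover, every non-singleton $q$-component contains a vertex of $U$, so at any moment there are at most $|U|\le 2^{k-2}$ non-singleton $q$-components, and therefore at least $c_t-2^{k-2}$ of the coordinates of $\underline w_t$ are weight-$1$ singletons. This abundance of weight-$1$ balls is exactly what lets us land in the hypothesis of Lemma~\ref{suly1lemma}: the adversary will aim for a state whose total weight is an exact power of two — namely $2^{k}$ when $n$ is even and $2^{k}+1$ when $n$ is odd — in which at least $2^{k-1}$ coordinates equal $1$. For such a vector Lemma~\ref{suly1lemma} yields $m(\underline w_t)=c_t-1$ (even case) or $c_t-2$ (odd case), which is precisely what the master inequality needs. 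One checks the mild side conditions of Lemma~\ref{suly1lemma}(ii) for the odd case (namely $c_t>2^{k-1}$ and $c_t\ne 2^{k-1}+1$), which hold because the singletons alone already number at least $2^{k-1}$.

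It remains to steer the weight vector into this target regime. The total weight starts at $n=2^k+l$ and can only stay the same (on a SAME answer) or drop by an even amount (on a DIFF answer), so the adversary will use DIFF answers to shed the surplus $l<2^k$ and bring the total down to the desired power of two. The slack $|U|\le 2^{k-2}$ is exactly what makes $l<2^{k}$ absorbable: by the count above, even after removing weight totalling $l$ the number of surviving weight-$1$ singletons stays above $2^{k-1}$, and the number of non-singleton components stays bounded. Thus the abundance guaranteed by the small vertex cover both supplies the weight-$1$ balls required by Lemma~\ref{suly1lemma} and leaves room to perform the reduction.

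The main obstacle — and the part needing the most care — is that the adversary does not choose which edge is queried, only the SAME/DIFF label. Reducing the total weight by a DIFF answer typically turns two equal positive weights into a balanced ($w=0$) component, and each such component weakens the bound: a weight-$0$ coordinate is non-relevant, so it inflates $c_t$ without contributing to $m(\underline w_t)$, and $z$ balanced components degrade the master inequality to $Q\ge n-1-z$. Hence the real work is to carry out the reduction from $2^k+l$ to the target power of two essentially \emph{without} producing spurious balanced components — for instance by absorbing surplus weight-$1$ balls into components of weight $\ge 2$ rather than pairing equal weights off — and to show this can be arranged against an arbitrary sequence of queries. Here the flexible parity criterion of Lemma~\ref{suly1formalemma}, together with the power-of-two bookkeeping of Observation~\ref{obs} and Lemma~\ref{suly2lemma}, should replace the rigid ``exactly $2^{n}$ ones, total exactly $2^{n+1}$'' hypothesis and give the adversary enough freedom to keep the hardness certificate intact throughout the play.
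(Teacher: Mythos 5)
Your plan follows the same skeleton as the paper's proof: the master inequality $Q\ge t+m(\underline w_t)$, an adversary that drives the total weight down to $2^k$ (respectively $2^k+1$) while keeping all $q$-component weights positive, and an application of Lemma~\ref{suly1lemma} at that moment. You even name the correct adversarial move (absorb a fresh weight-$1$ vertex into a component of weight at least $2$, so the total drops by $2$ without creating a balanced component). However, the proposal stops exactly where the actual work lies, and the one quantitative claim you offer in its place is unjustified.

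Concretely, you assert that ``by the count above, even after removing weight totalling $l$ the number of surviving weight-$1$ singletons stays above $2^{k-1}$.'' The count above only says there are at most $|U|\le 2^{k-2}$ non-singleton $q$-components; it says nothing about how many vertices those components have swallowed. The algorithm is free to compare fresh vertices of $G\setminus U$ with components meeting $U$ over and over; each such query destroys a weight-$1$ singleton, and only those queries answered so that the total weight drops (at most $\lceil l/2\rceil$ of them) bring the adversary closer to the target, so nothing in your proposal bounds the number of weight-increasing answers given before the stopping state is reached. The paper closes this hole with an accounting argument tied to its explicit strategy: a component meeting $U$ can absorb a fresh vertex with a weight-increasing answer only when its weight is $1$, and it must undergo a weight-decreasing answer before it can do so again; hence $(\#\text{increases})\le |U|+(\#\text{decreases})\le |U|+l$, so the untouched vertices of $G\setminus U$ number at least $n-2|U|-l=2^k-2|U|\ge 2^{k-1}$ --- this is precisely where $|U|\le 2^{k-2}$ enters. (The paper packages this as the potential $\Psi=\sum_X w(X)+|\{X: X\cap U\ne\emptyset,\ w(X)=1\}|$.) The same accounting is needed in two further places you gloss over: it shows the algorithm cannot terminate before the stopping state (every component meeting $U$ has weight at most $2|U|\le 2^{k-1}$, below half the total), and it rules out the odd-case configuration $c_t=2^{k-1}+1$, i.e.\ $2^{k-1}$ singletons plus one component of weight $2^{k-1}+1$, for which $m(\underline w_t)=0$; your argument that ``the singletons alone already number at least $2^{k-1}$'' does not exclude this configuration. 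Finally, your hope that Lemma~\ref{suly1formalemma}, Observation~\ref{obs} and Lemma~\ref{suly2lemma} supply the missing flexibility points in the wrong direction: the paper's proof of this theorem uses none of them --- the rigid Lemma~\ref{suly1lemma} suffices once the counting above is in place.
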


\begin{proof} Denoting by $w(X)$ the weight of a $q$-component $X$, we initially have $\sum_X w(X)=n$.
The adversary will maintain in the first part of the algorithm that $w(X)\ne 0$ for every $q$-component $X$.

Let us now describe a strategy of the adversary for the first part of the algorithm.
Whenever for some $v\in G\setminus U$ we ask the first query containing $v$, if the other vertex in the query $u\in U$ is such that $w(u)=p\ge 2$, the answer is such that the weight of the new $q$-component is $p-1$, thus $\sum_X w(X)$ decreases by $2$.
In every other case the answer is such that the weights are added up, i.e., $\sum_X w(X)$ remains the same.

Introduce the potential function\footnote{Instead of the potential function argument, the proof of Theorem \ref{lefogo1} could also be finished in the same way as the proof of Theorem \ref{oddpath} or \ref{lefogo2}.} $\Psi=\sum_X w(X) + |\{X\mid X\cap U\ne \emptyset,\, w(X)=1\}|$.
The adversary's strategy is such that every time we ask the first query containing a $v\in G\setminus U$, the function $\Psi$ decreases by at least $1$.
Since initially $\Psi=n+|U|$, after $|U|+l$ queries involving some vertex of $V\setminus U$, we would have $2^k\ge \Psi\ge \sum_X w(X)$.
But the adversary stops executing this algorithm the moment we have $\sum_X w(X)=2^k$ or $\sum_X w(X)=2^k+1$; this surely happens, as $\sum_X w(X)$ can only decrease by $2$.

Let us consider the vertices from $G\setminus U$ that were merged into some $q$-components (i.e. those that appeared in queries). Let $x$ denote the number of those where the total weight did not decrease when they first appeared in a query, and $y$ denote the number of those where the total weight decreased when they first appeared in a query. Then we have $x\le y+|U|$. Indeed, consider a $q$-component containing a vertex $u\in U$, and observe that whenever the weight of this component increased by merging it with a vertex from $G\setminus U$, the next time its weight decreased. 

This implies that at the point where the adversary stops executing the algorithm, the number of vertices in $G\setminus U$ that have not appeared in any query is at least $n-|U|-(|U|+l)\ge 2^{k-1}$.
Now we can apply Lemma \ref{suly1lemma} to the current $q$-components as weighted balls. Indeed, we have at least $2^{k-1}$ $q$-components of weight 1, and the total weight is $2^k$ or $2^k+1$. By Lemma \ref{suly1lemma} the number of queries needed is the number of components minus 1 or minus 2, depending on the parity. 
Hence even if we could compare any two $q$-components from now, we still could not solve the majority problem with less queries.
\end{proof}

With a similar method, we can obtain the following lower bound for odd paths.

\begin{theorem}\label{oddpath}
$m(P_n)\ge n-6$.\\
Moreover, $m(P_n)\ge n-5$ unless $n+1$ or $n+3$ is a power of two.
\end{theorem}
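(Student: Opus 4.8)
The plan is to prove the lower bound via an adversary argument, using the same template as the proof of Theorem~\ref{lefogo1}: the adversary answers the first queries so as to steer the configuration of $q$-components of $P_n$ into a shape covered by the weighted lower bounds of the previous section, and then invokes Corollary~\ref{suly1cor} (or Proposition~\ref{O1G}) on the resulting weight vector. The arithmetic of Theorem~\ref{lefogo1} already exhibits the crucial cancellation: if, after $q_1=n-c$ queries, the adversary can stop in a state of $c$ $q$-components whose weight vector $\underline w$ satisfies $m(\underline w)\ge c-2-s$, then — since continuing is a graph-restricted instance of the weighted problem on $\underline w$, and restriction can only increase the query count — the total number of queries is at least $q_1+(c-2-s)=(n-c)+(c-2-s)=n-2-s$. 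Hence the whole task reduces to forcing a stopping configuration on which a weighted bound applies with a small constant $s$, independent of how many queries were spent reaching it.

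The guiding principle is that the adversary keeps \emph{every} $q$-component unbalanced, $w(X)\ge 1$, and simultaneously keeps the weights \emph{small}, so as to delay the appearance of a dominating component (a component of weight exceeding half the total) for as long as possible. Keeping components small is exactly what Lemma~\ref{treelemma} does, but that strategy permits balanced ($w=0$) components, and on a path this is fatal: as explained after Lemma~\ref{treelemma}, a run of balanced components can be swept together from the left and from the right, leaving two unbalanced components and finishing in about $n/2$ queries. Forbidding zero weights blocks this collapse. The lever the adversary uses is the choice on each adjacent merge of $X,Y$: answering "sum" yields weight $w(X)+w(Y)\ge 2>0$, while on unequal weights answering "difference" yields $|w(X)-w(Y)|\ge 1>0$ and lowers the total $\sum_X w(X)$ by $2\min(w(X),w(Y))$. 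Thus the adversary can always avoid a zero, and can push the total downward precisely on the unequal merges.

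Next I would show that, using this lever, the adversary can drive $\sum_X w(X)$ (which stays odd, of the same parity as $n$) into a narrow window of the form $2^{m+1}+2s+1$ for the appropriate power of two near $n$, while retaining at least $2^m+2s$ components of weight one. At such a stop, Corollary~\ref{suly1cor}(ii) applies to $\underline w$ and gives $m(\underline w)\ge c-2-s$, whence $m(P_n)\ge n-2-s$ by the cancellation above. The remaining quantitative content is to check that the nearest reachable window is always close: one aims for $s\le 3$ for a generic odd $n$, yielding $m(P_n)\ge n-5$, and must allow $s\le 4$, yielding only $m(P_n)\ge n-6$, exactly in the extreme cases where the grid of powers of two is least favourable, namely when $n+1$ or $n+3$ is a power of two and the nearest usable window sits one step farther away.

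The hard part is the weight–steering, and this is where the whole difficulty of odd paths is concentrated. In Theorem~\ref{lefogo1} the dominating set $U$ forced every query to touch $U$ and handed the adversary a clean accounting of growth versus shrinkage; on a path there is no such handle, and the adversary can only react to adjacent merges chosen by the solver. The single obstructive case is a query between two \emph{equal} weights, where the adversary is forced either to sum (growing a component, toward a dominator) or to take the difference (creating a forbidden zero). The solver will exploit this with a doubling strategy, repeatedly merging equal halves to manufacture a large component cheaply, and this is precisely what must be defeated. The core of the proof is therefore to manage these forced equal-merges — tolerating only a bounded, provably non-sweepable number of them — while guaranteeing that enough weight-one components survive at the stopping time and that the total lands in the intended window. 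Bounding the number of surviving unit components on a linear structure (the solver consumes them as it merges along the line) and pinning down the resulting constant, larger by one exactly when $n+1$ or $n+3$ is a power of two, is the main technical obstacle; the rest is the bookkeeping already rehearsed in Theorem~\ref{lefogo1}.
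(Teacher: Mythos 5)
Your high-level skeleton (an adversary phase that steers the $q$-component weights, followed by an appeal to the weighted lower bounds, with the counting $(n-c)+(c-2-s)=n-2-s$) matches the paper's, and your observation that graph-restriction can only increase the query count is correct. But the mechanism you build everything on --- the adversary keeps \emph{every} $q$-component unbalanced --- is not merely the ``main technical obstacle'' you defer to the end; it is unworkable, and no bookkeeping can rescue it. If the adversary never creates a balanced component, then every query joining two components of equal weight must be answered SAME. The solver exploits this directly: let $2^j$ be the least power of two exceeding $n/2$ (so $2^j\le n$), and merge the first $2^j$ vertices of $P_n$ in balanced binary fashion --- first the pairs $(v_1,v_2),(v_3,v_4),\dots$, then $(v_2,v_3),(v_6,v_7),\dots$, and so on. Every one of these $2^j-1$ queries compares two components of \emph{equal} weight, so every answer is forced to SAME, and the solver ends with a monochromatic component of weight $2^j>n/2$, i.e.\ a guaranteed majority vertex, after only $2^j-1$ queries; for $n=2^{k+1}-1$ that is $(n-1)/2$ queries. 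So the ``no zeros'' adversary cannot prove any bound of the form $n-O(1)$. And the moment the adversary escapes the cascade by answering DIFF on an equal merge, it creates a weight-$0$ component, and then your cancellation breaks: the weighted lower bound counts only the \emph{nonzero} balls, and every balanced component that the solver can leave isolated at the end is a saved query. Controlling how many balanced components can survive to the end --- not steering the total weight into a window --- is the actual content of the theorem.

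The paper resolves exactly this with an idea absent from your proposal: a sparse hitting set $U$ (every $9$th, and for the stronger bound every $8$th, vertex of the path). Components meeting $U$ are kept unbalanced, while balanced components are \emph{permitted} but confined to the short gaps between consecutive $U$-vertices. After the adversary stops at total weight $2^k+1$, Lemma \ref{suly1lemma} forces all of $U$ to be joined into at most two final components, so all unqueried edges lie in a single gap; a gap of $8$ vertices can host at most $4$ balanced components, hence at most $5$ edges stay unqueried, giving $n-6$ (spacing $8$ gives at most $4$ unqueried edges and $n-5$). Note also that the exceptional cases $n+1$ or $n+3$ a power of two arise from the size constraint $|U|=\lfloor (n+12)/8\rfloor<2^{\lfloor\log_2 n\rfloor-2}$, needed so that enough weight-$1$ components remain for the weighted lemma --- not, as you guess, from the total weight landing in a less favourable window.
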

\begin{proof}
We have already seen that this holds if $n$ is even, so it is enough to prove the theorem for $n$ odd.
First we prove the weaker claim $m(P_n)\ge n-10$.
The statement holds for $n<1000$ as $m(P_n)\ge n-b(n)$.
Let $U$ include every $9^{\mathrm{th}}$ vertex of $P_n$, starting with the first, and also the last vertex of $P_n$, so $\lceil \frac n9 \rceil\le |U|\le \lceil \frac n9 \rceil+1$, and $P_n\setminus U$ consists of paths on $8$ vertices (and possibly one shorter path at the end).
We answer each query such that for any $q$-component $X$ if $X\cap U=\emptyset$, then $w(X)\le 1$, while if $X\cap U\ne \emptyset$, then $1\le w(X)\le 2$.
In each step the total weight decreases by $0$ or $2$, so after a while it becomes $2^k+1$ for $k=\lfloor \log n\rfloor$.
When this happens, we apply 
Lemma \ref{suly1lemma} to the current $q$-components as weighted balls.
Indeed, $\sum_{X: X\cap U\ne \emptyset} w(X)\le 2|U|=2\lceil \frac n9 \rceil+2\le \frac n4 \le  2^{k-1}$ if $n>1000$, so $\sum_{X: X\cap U= \emptyset} 1 > 2^{k-1}$.
By Lemma \ref{suly1lemma}, the number of queries needed to finish is at least the number of components minus 2. Equivalently, Lemma \ref{suly1lemma} states that we need to connect the weighted balls until at most two components remain, thus, we need to connect all of $U$ into at most two components. This means querying all the edges between any two vertices of $U$ that are in the same component at the end. That means the edges we have not queried are all on a path of length $9$ (between two vertices from $U$).
This proves $m(P_n)\ge n-10$.

But we can do even better, because out of the $9$ edges of the path at least $4$ must be queried if the path contains no non-balanced components.
This proves $m(P_n)\ge n-6$ if $n$ is large enough, but now we have to be more careful with the calculations.
Because of this, we also change how we select $U$; instead of starting with the first vertex, we start with the second vertex of the path, then take every $9^{\mathrm{th}}$ vertex, and finally the last but one vertex.
We can afford to skip the endvertices, as a single vertex anyhow cannot form a balanced component, we can only compare it to its adjacent vertex from $U$.
This gives $|U|=\lfloor \frac{n+14}9\rfloor$, and $\frac{n+14}9\le\frac n8$ if $n\ge 112$, while for $n<127$ the lower bound $n-b(n)\ge n-6$ holds.

The proof of the moreover part is similar, except that after we start with the second vertex, we take every $8^{\mathrm{th}}$ vertex, and finally the last but one vertex.
This way only $4$ edges can remain unqueried between two different components.
This gives $|U|=\lfloor \frac{n+12}8\rfloor$, and this is less than $2^{\lfloor\log_2 n\rfloor -2}$ unless $n+1$ or $n+3$ is a power of two.
\end{proof}

It is an interesting question where the truth is between $n-3$ and $n-6$ for $P_n$ for odd $n$.
Our only (computer verified) case is $m(P_{15})=n-3$.\\

Now we present the lower bound for general trees.
The proof of Theorem \ref{lefogo2} is based on a similar idea as that of Theorem \ref{lefogo1}, but also combines ideas from Theorem \ref{oddpath} and uses Proposition \ref{O1G}.
We also need the following version of the folklore generalization of the concept of centroid for trees, known as \emph{centroid decomposition}.

\begin{proposition}\label{centroid}
    In every tree on $n$ vertices, for every integer $p$, there is a subset $U$ of at most $2n/p$ vertices such that every component of $G\setminus U$ has at most $p$ edges (including the edges from the components to $U$).
\end{proposition}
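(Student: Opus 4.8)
The plan is to construct $U$ by a single bottom-up (post-order) sweep of a rooted version of the tree, sealing off a cluster into a finished component as soon as it has accumulated $p$ incident edges. First I would root $T$ at an arbitrary vertex $r$ and process the vertices in post-order. For a vertex $v$ I maintain a counter $d(v)$ equal to the number of edges incident to the cluster currently being grown around $v$ and its not-yet-sealed descendants, \emph{including} the edges running from that cluster down to vertices already placed in $U$, but \emph{excluding} the edge from $v$ to its parent. Concretely, letting $c$ range over the children of $v$, I set $d(v)=\sum_{c\notin U}(d(c)+1)+\sum_{c\in U}1$, since each child edge $vc$ contributes $1$ and a non-sealed child additionally passes up its own count $d(c)$. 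The rule is: if $d(v)\ge p$, put $v$ into $U$ and seal; otherwise leave $v$ out of $U$ and pass $d(v)$ up to its parent.

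Next I would verify that this rule produces components of the required size. When a vertex $w$ is placed in $U$, each of its non-$U$ children $c$ tops a component whose incident edges are exactly $d(c)$ (its internal edges together with its edges down to $U$) plus the single edge $cw$ up to $w\in U$; since $c$ was not selected we have $d(c)\le p-1$, so that component has at most $p$ incident edges. The same bound applies to the final cluster around the root, where $d(r)\le p-1$ if $r\notin U$. Hence every component of $T\setminus U$ has at most $p$ edges counting edges to $U$, as required.

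It then remains to bound $|U|$, and here is the key accounting. Each tree edge $(x,\mathrm{parent}(x))$ is counted exactly once, namely in the formula for $d(\mathrm{parent}(x))$, and this contribution propagates upward through non-$U$ ancestors until it is absorbed into $d(w)$ for the first $U$-ancestor $w$ (or into the root cluster, which is not in $U$). Thus the edge sets absorbed by distinct vertices of $U$ are pairwise disjoint, so $\sum_{w\in U} d(w)\le n-1$. Since every selected $w$ satisfies $d(w)\ge p$, this gives $p\,\abs{U}\le n-1$, that is $\abs{U}\le (n-1)/p\le 2n/p$.

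The step I expect to be the main obstacle is getting the definition of the accumulator $d(v)$ right so that it serves two purposes at once. The statement insists that a component be charged for its edges to $U$, and the naive strategy of repeatedly peeling whole subtrees and dropping their apexes into $U$ fails precisely here, because a vertex that survives many peeling rounds can collect arbitrarily many $U$-neighbours and so end up in a component with far more than $p$ incident edges. Folding these boundary-to-$U$ edges into $d(v)$ and sealing the moment the count reaches $p$ is what keeps sealed clusters small, while the fact that $d$ is reset (not passed up) at each sealed vertex is exactly what makes the charged edge sets disjoint and yields the linear bound on $\abs{U}$. Once the accumulator is defined in this way, the two verifications above are routine.
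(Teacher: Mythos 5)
Your proof is correct, but there is nothing in the paper to compare it against: the paper states this proposition as a folklore fact about centroid decompositions and gives no proof at all, so your argument genuinely supplies the missing content. Both verifications hold up under scrutiny. The recursion $d(v)=\sum_{c\notin U}(d(c)+1)+\sum_{c\in U}1$ does make $d(v)$ equal to the number of edges incident to the unsealed cluster below $v$ (internal edges plus edges down to already-sealed vertices, excluding the parent edge of $v$), so every component of $T\setminus U$, being topped either by a non-selected child $c$ of a vertex of $U$ or by the root, has at most $d(c)+1\le p$ incident edges counted as the statement requires. For the size bound, each tree edge's unit contribution enters $d(\mathrm{parent}(x))$ and propagates upward until it is absorbed by the first ancestor in $U$ (or by the root cluster), so the edge sets charged to distinct vertices of $U$ are indeed pairwise disjoint, giving $p\,\abs{U}\le n-1$. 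In fact you prove the stronger bound $\abs{U}\le (n-1)/p$, which beats the stated $2n/p$ and comfortably suffices for the paper's only application of the proposition (Theorem~\ref{lefogo2} with $p=32$, which needs $\abs{U}\le 2^{k-3}-1$). Your diagnosis of why naive subtree-peeling fails is also exactly right: the clause ``including the edges from the components to $U$'' is the delicate point, a surviving vertex can accumulate many sealed neighbours, and folding those boundary edges into the accumulator before testing against $p$ is precisely what makes the greedy work.
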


\begin{theorem}\label{lefogo2}
If $G$ is a tree on $n$ vertices, then $m(G)\ge n-65$.
\end{theorem}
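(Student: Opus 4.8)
The plan is to combine the covering-set idea behind Theorem~\ref{lefogo1} with the localized weight control of Theorem~\ref{oddpath}, using the centroid decomposition to reduce an arbitrary tree to a situation with pieces of bounded size, and then to invoke Proposition~\ref{O1G} as the weighted lower bound. First I would fix a constant $p$ (to be optimized at the very end; it will be a few dozen) and apply Proposition~\ref{centroid} to obtain a set $U$ with $|U|\le 2n/p$ such that every component of $G\setminus U$ spans at most $p$ edges. These small pieces play the role of the length-$8$ segments in the path argument, while $U$ plays the role of the covering set of Theorem~\ref{lefogo1}.

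Next I would design an adversary answering strategy maintaining bounded weights on the $q$-components, in the spirit of Lemma~\ref{treelemma}: every $q$-component disjoint from $U$ keeps its weight in $\{1,2\}$ (a balanced, i.e.\ weight-$0$, piece being created only when a merge is parity-forced), while every $q$-component meeting $U$ keeps weight at most $2$. As in Theorem~\ref{lefogo1}, a potential function guarantees that the total weight $\sum_X w(X)$ drops by $0$ or $2$ at each merge, so the adversary can stop exactly when $\sum_X w(X)=2^{a+1}+3$ for a suitable $a=\lfloor\log_2 n\rfloor-O(1)$. The quantitative point, just as in Theorems~\ref{lefogo1} and~\ref{oddpath}, is that the weight carried by the $U$-meeting components is at most $2|U|\le 4n/p\le 2^{a}$ once $p$ is large enough, so more than $2^a$ of the total weight — and hence more than $2^a$ of the $q$-components — comes from the bounded-weight pieces and can serve as the $1\le w_i\le 2$ balls required by Proposition~\ref{O1G}.

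At the stopping moment I would read the $q$-components as a weight vector (grouping the piece-weights so that the first $j$ of them sum to exactly $2^a$) and apply Proposition~\ref{O1G}: solving from here requires at least (number of relevant $q$-components)$-3$ further queries. Since in a tree an algorithm using $t$ queries leaves exactly $n-t$ $q$-components, writing $r$ for the number of $q$-components and $r'$ for the number of relevant (nonzero-weight) ones at that moment, the reduction of Section~\ref{sec:graphs} gives
\[
m(G)\ \ge\ (n-r)+(r'-3)\ =\ n-3-(r-r'),
\]
so everything reduces to bounding $r-r'$, the number of balanced $q$-components, by $62$.

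\textbf{The main obstacle} is exactly this last bound, and it is where trees are genuinely harder than paths. In a path, connecting two consecutive vertices of $U$ automatically forces every edge of the segment between them to be queried, so the only unqueried edges survive in a single ``break'' segment, immediately giving a constant saving; this is what powers Theorem~\ref{oddpath}. In a tree, however, a piece can carry dangling subtrees hanging off the Steiner structure that connects its $U$-vertices, balanced sub-components can hide there, and there are $\Theta(n/p)$ pieces a priori. The heart of the proof is therefore to show that the adversary's weight control \emph{confines} the surviving balanced components: Proposition~\ref{O1G} forces every relevant ball — in particular every vertex of $U$ and every not-yet-balanced vertex inside each piece — to be merged into at most three $q$-components, so all the connecting subtrees must be queried in full, and only the at most two break regions separating the three final components, together with the interior of the boundedly many pieces in which balance was parity-forced, can retain unqueried edges. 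A per-piece count in the style of ``at least $4$ of the $9$ edges must be queried'' from Theorem~\ref{oddpath}, together with the bounded piece size $p$ and the parity bookkeeping of Lemma~\ref{treelemma}, should bound the total number of unqueried edges by an absolute constant independent of $n$; optimizing $p$ then yields $r-r'\le 62$, i.e.\ $m(G)\ge n-65$. I expect the careful design of the adversary invariant that simultaneously keeps enough weight-$1$ balls alive and keeps the number of forced balanced components constant to be the most delicate and error-prone part of the argument.
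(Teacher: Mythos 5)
Your proposal assembles the same ingredients as the paper's proof (Proposition~\ref{centroid} with a constant piece size, a weight-controlled adversary in the spirit of Lemma~\ref{treelemma} and Theorem~\ref{lefogo1}, a stopping rule tied to the total weight, and Proposition~\ref{O1G} as the weighted lower bound), but your central accounting step has a genuine gap. You reduce the theorem to showing that $r-r'$, the number of balanced $q$-components \emph{at the stopping moment}, is at most $62$; this quantity cannot be bounded by any constant, for any adversary of this type. A balanced merge only costs the adversary a drop of $2$ (or $4$) in total weight, and the total weight must travel from $n$ down to roughly $2^{a+1}+3$ before the adversary is allowed to stop; when $n$ is just below the next power of two this distance is $\Theta(n)$. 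Meanwhile the algorithm can force balanced merges in $\Theta(n)$ \emph{different} pieces: on a path, for instance, two adjacent interior vertices of a piece have exactly two edges to the rest, so condition \textbf{(ii)} of Lemma~\ref{treelemma} forces their union to have weight $0$ (and under your own invariant, merging two adjacent weight-$2$ components forces weight $0$, since $4$ is not allowed). So $r-r'=\Theta(n)$ is achievable at the stopping moment, and your inequality $m(G)\ge n-3-(r-r')$, though valid, becomes vacuous; the slack is exactly that the weighted bound $r'-3$ is blind to the graph-theoretic cost of absorbing balanced components, which is the heart of the theorem. (A smaller slip: ``more than $2^a$ of the total weight, hence more than $2^a$ of the $q$-components'' does not follow --- weights up to $2$ only give more than $2^{a-1}$ components, which matters for the hypothesis $k>2^a+2$ of Proposition~\ref{O1G}.)

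The correct bookkeeping, and the paper's, is done at the \emph{end} of the algorithm: in a tree the number of queries equals $(n-1)$ minus the number of unqueried edges, and one bounds the unqueried edges by $2p=64$ directly. Proposition~\ref{O1G} (or Lemma~\ref{suly1lemma} when the adversary stops at $2^k+1$) is used only to conclude that at most \emph{three} non-balanced $q$-components remain at termination; the adversary's invariants then localize everything else. Components meeting $U$ stay positive, so these at most three components cover $U$, and the strategy is engineered so that every balanced component separates either two $U$-vertices or a positive-weight part from a $U$-vertex; since $q$-components of a tree are connected subtrees, a balanced component surviving at termination must then separate two \emph{distinct} final non-balanced components. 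With at most three of those, all surviving balanced components lie in at most two ``break regions'', each connected and disjoint from $U$, hence inside a single piece with at most $p$ edges --- this is what caps the unqueried edges at $2p$. Your last paragraph gestures at exactly this, but you frame it as a means to bound the stopping-moment count, and you allow unqueried edges also in ``the boundedly many pieces in which balance was parity-forced''; that set of pieces is \emph{not} bounded, and the actual content of the argument is that by termination every balanced component outside the two break regions must have been absorbed by further queries. Finally, your adversary sketch does not secure the separation property: the paper needs the finer split of each piece into a connecting part and hanging parts, the dummy-vertex trick ensuring a fully merged hanging part is never balanced, and it must let components meeting $U$ reach weight $4$ (your cap of $2$ is inconsistent, as merging two positive weight-$2$ components touching $U$ forces $0$ or $4$), which is also why the paper stops at $2^k+1$ \emph{or} $2^k+3$ rather than at a single prescribed value.
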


\begin{proof}
Let $n=2^k+l$, where $l<2^k$ is odd. Observe that the statement is trivial if $n\le 65$, thus we can assume $k\ge 6$.
Apply Proposition \ref{centroid} with $p=32$ to obtain a set $U$ of vertices such that $|U|\le 2^{k-3}-1$ and each component $T$ of $G\setminus U$ has at most $p$ edges.
(We write $p$ instead of $32$ throughout the proof.)

    We proceed as in the proof of Theorem \ref{lefogo1}.
    We denote by $w(X)$ the weight of a $q$-component $X$, and for a vertex $u$ of $G$, $w(u)$ denotes the weight of the $q$-component containing $u$. We initially have $\sum_X w(X)=n$.
    The adversary will maintain in the first part of the algorithm that $w(X)\ne 0$ for every $q$-component $X$ that intersects $U$.
    
    We split each component $T$ to a \emph{connecting} part $T'$ and some \emph{hanging} parts $T_1,T_2,\ldots$ where any of these can be empty, as follows.
    If $v\in T$ separates some vertices of $U$ from each other, then it goes to $T'$.
    Each connected component of $T\setminus T'$ forms a different $T_i$.
    Notice that each hanging part $T_i$ is a subtree of $T$, thus it has a unique vertex $r(T_i)$ that separates $T_i\setminus \{r(T_i)\}$ from $T\setminus T_i$; we call $r(T_i)$ the \emph{root} of $T_i$.
    
    We answer queries inside $T_i$ according to Lemma \ref{treelemma} (applied only to $T_i$), while if the query $X\cap T'\ne\emptyset$, we answer such that $w(X)\le 2$ (which is similar to Theorem \ref{oddpath}).
    This way the weight of any $X\subset G\setminus U$ will be at most $2$.
    The crucial property is that the balanced $q$-components of $T$ will always separate either two $U$ vertices, or some positive weight part of a $T_i$ from a $U$ vertex.
    This way they are ``in the way'' to compare these parts with the rest of the graph, so they cannot be simply ignored.
    The strategy of the adversary will be to make sure that the game cannot end while there are many unbalanced $q$-components.
    After there are only few unbalanced $q$-components the game might end, but in this case the graph could be made into a single $q$-component by adding $O(p)$ further edges to it.
    This shows that at most these many queries can be saved.
    
    Also, in case we merge all of some $T_i$ into one $q$-component, the adversary would like to avoid $w(T_i)=0$.
    This cannot happen if $T_i$ has an odd number of vertices; if $T_i$ has an even number of vertices, the adversary adds an (imaginary) extra degree one vertex $r'(T_i)$ to $T_i$ that is adjacent only to $r(T_i)$, to obtain $T_i^*$, and applies Lemma \ref{treelemma} to $T_i^*$ instead of $T_i$.
    Since $r'(T_i)$ is never compared with anything, merging all of $T_i$ into a $q$-component cannot give $w(T_i)=0$, because $T'=T_i^*\setminus T_i$ has only one component, $\{r'(T_i)\}$. 
    Therefore, in case the whole tree $T_i$ is merged, we get $w(T_i)=2$.
    
    Whenever we compare some $Y\subset G\setminus U$ with an $X$ intersecting $U$ such that $w(X)\ge 3$, the adversary answers such that the weight of the new $q$-component is $w(X)-w(Y)$, thus $\sum_X w(X)$ decreases by $2w(Y)\le 4$.
    In every other case the adversary answers so that the weights are added up, i.e., $\sum_X w(X)$ remains the same.
    This way the weight of a $q$-component can never exceed $4$, unless we merge two $q$-components that both intersect $U$.
    Because of this, we can conclude that $\sum_{X: X\cap U\ne \emptyset} w(X)\le 4|U|\le n/4< 2^{k-1}$.

  The adversary stops executing this algorithm the moment we have $\sum_X w(X)=2^k+1$ or $2^k+3$; this surely happens, as $\sum_X w(X)$ is odd and can decrease by at most $4$.
  As we have seen in the earlier proofs, if $\sum_X w(X)=2^k+1$, then we will have two non-balanced $q$-components when the algorithm is done.
  If $\sum_X w(X)=2^k+3$, then we can apply Proposition \ref{O1G}, whose conditions are shaped to work here, to conclude that we will have at most three non-balanced $q$-components when the algorithm is done.
     
 Moreover, these few remaining non-balanced $q$-components need to cover $U$, as the weights of sets intersecting $U$ stays positive throughout the algorithm.
 If at the end we have at most $\ell$ components, then adding $\ell-1$ original tree component $T$'s, we can make the $q$-graph connected. As every tree has at most $p$ vertices, and in our case $\ell\le 3$, adding $2p$ edges can make the $q$-graph connected.
    
To summarize, instead of asking all $n-1$ edges, we might save $2p=64$.
\end{proof}

\begin{rem}
We could get a better constant by considering the number of yet unqueried edges we need to add to connect the remaining non-balanced $q$-components (as in the end of the proof of Theorem \ref{oddpath}).
Here we will not go to details, as our bound is probably anyhow far from being optimal, but this would give something like $n-33$ as a lower bound. 
\end{rem}


\subsection{Non-deterministic complexity of odd trees}

We have seen that it is much harder to prove the lower bound $m(T) \ge n-65$ for trees of odd
order $n$ than the lower bound $m(T) \ge n-1$ for trees of even order $n$. Somewhat even more 
surprisingly, there is a significant difference between the so-called non-deterministic 
complexities of trees of even and odd order. The non-deterministic complexity $m_{nd} (G)$ of a 
graph $G$ is defined as the minimum number of queries needed to find a majority vertex in the worst case, provided we know the color of each vertex beforehand from an unreliable source and we just have to verify (some of) this information. Let us observe that in the proof of 
Proposition \ref{eventrees} we actually showed  $m_{nd} (T) = n-1$ for any tree $T$ of even 
order $n$. 

\begin{proposition} \label{nondet} 
Let $P$ be a path of order $n$, such that $n$ is odd. Then $m_{nd}(P)=n- \Theta (\sqrt{n})$.
\end{proposition}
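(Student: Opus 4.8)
The plan is to translate the non-deterministic problem into a question about the partial-sum walk of the coloring, and then to prove matching $\Theta(\sqrt n)$ bounds for this reformulation.

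First I would set up the reduction. Order the vertices $v_1,\dots,v_n$ along $P$ and write $\sigma_i=\pm1$ for the (claimed) color of $v_i$, with partial sums $P_0=0$ and $P_i=\sigma_1+\dots+\sigma_i$. If we leave a set of edges unqueried, the path breaks into consecutive \emph{blocks}, and within each block the coloring is determined up to a global flip; a block $[a,b]$ therefore behaves exactly like a weighted ball of weight $w=|P_b-P_a|$, as in Section~\ref{sec:graphs}. A set of queried edges certifies a majority vertex precisely when the resulting weighted instance is already decided, i.e.\ when one block weight exceeds the sum of all the others. Since $n$ is odd, $\sum w_i\equiv n\pmod 2$ is odd, so the signed total is never zero and there are no ties to worry about. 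Writing $\mu(c)$ for the maximum number of blocks in a certifying cut, the number of queries saved is $\mu(c)-1$, so $m_{nd}(P)=n-\min_c\mu(c)$, and it suffices to prove $\min_c\mu(c)=\Theta(\sqrt n)$.

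For the upper bound $m_{nd}(P)\le n-\Omega(\sqrt n)$ I would show $\mu(c)\ge\Omega(\sqrt n)$ for every $c$, splitting on the largest multiplicity $\kappa=\max_\ell|\{i:P_i=\ell\}|$. If $\kappa\ge\sqrt n$, cut at the consecutive visits $i_1<\dots<i_\kappa$ to a most-visited level $\ell$: the intervals between them form $\kappa-1$ balanced (weight-$0$) blocks, while the prefix $[0,i_1]$ and suffix $[i_\kappa,n]$ have weights $|\ell|$ and $|S-\ell|$; as $S$ is odd these differ, so one dominates and the cut certifies, giving $\mu(c)\ge\kappa-1\ge\sqrt n-1$. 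If instead $\kappa<\sqrt n$, then the walk occupies more than $n/\kappa>\sqrt n$ levels, so its range $R=m_{\max}-m_{\min}$ exceeds $\sqrt n$; here I would reserve a heavy monotone climb up to a global extreme as a single dominant block and use a one-directional first-passage ladder on the remaining side to manufacture $\Omega(\sqrt n)$ weight-$1$ blocks, arranging the ladder so that its total weight stays below the weight of the dominant block.

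For the lower bound $m_{nd}(P)\ge n-O(\sqrt n)$ I would exhibit a coloring whose walk zig-zags inside a band of width $W=\Theta(\sqrt n)$ (a sequence of tents of height $W$). Two observations bound $\mu(c)$. First, every interval has weight at most $W$, so in any certifying cut the non-dominant blocks of positive weight number fewer than $W=O(\sqrt n)$. Second, a balanced interval cannot lie inside a monotone stretch, hence contains a local extremum of the walk, and disjoint balanced intervals contain distinct extrema; since the band construction has only $O(n/W)=O(\sqrt n)$ local extrema, any cut has $O(\sqrt n)$ balanced blocks. Adding these and the single dominant block gives $\mu(c)=O(\sqrt n)$.

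The main obstacle I expect is the domination bookkeeping in the large-range case of the upper bound: producing many blocks and simultaneously guaranteeing that one of them strictly outweighs all the rest is delicate, because the natural ways of creating many blocks (long ladders) also inflate the total competing weight. Making the two regimes ($\kappa$ large versus $R$ large) interlock cleanly into a single $\Omega(\sqrt n)$ bound, and checking the certificate-model reduction carefully, are the steps that will need the most care; the lower-bound construction, by contrast, should be routine once the extremum-counting observation is in place.
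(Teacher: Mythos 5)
Your reduction to the partial-sum walk is sound: certification is indeed equivalent to one block's weight strictly exceeding the sum of all the others, and $m_{nd}(P)=n-\min_c\mu(c)$. Your high-multiplicity case ($\kappa\ge\sqrt n$) is correct, and is essentially the paper's pigeonhole case (there organized as ``$\Delta<2\sqrt n$''): many visits to one level give many balanced blocks, and the two end blocks have different weights because $n$ is odd, so one of them dominates. Your lower-bound construction and counting (band of width $W$, fewer than $W$ unbalanced non-dominant blocks since the dominant weight is at most $W$, and one turning point per balanced block with only $O(n/W)$ turning points available) is a faithful rephrasing of the paper's batch coloring and its two observations. So three of the four pieces are right and match the paper's route.

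The genuine gap is the large-range case of the upper bound, exactly where you predicted trouble, and it is not just delicate bookkeeping: the certificate you sketch does not exist for some colorings in that regime. Take the all-red coloring: $\kappa=1<\sqrt n$, the walk is monotone, the global extreme sits at position $n$, and there is no ``remaining side'' at all, so the ladder produces zero blocks. More generally, whenever the terminal value $S=P_n$ is close to the extreme $M$ (equivalently, $d=|R(n)-B(n)|$ is large), the descent after the maximum is too shallow to host $\Omega(\sqrt n)$ rungs; and whenever $S$ has sign opposite to the chosen extreme, the climb block of weight $M$ can never dominate a descent ladder, because the first-passage decomposition of the descent has total weight $M-S=M+|S|>M$ no matter where you truncate it, so the dominant block would have to be the suffix rather than the climb. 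The missing ingredient is the paper's separate first case: if $d\ge\sqrt n$, use no ladder at all --- query all but the last $\lceil d/2\rceil-1$ edges, leaving singletons, and the big block dominates outright. Only under the extra hypothesis $d<\sqrt n$ does the paper run a first-passage ladder (its Case 2, $\Delta\ge 2\sqrt n$), and there the accounting closes precisely because the competing total is $\Delta-d<\Delta$, using $d>0$ (arranged by a color/orientation symmetry you also do not address). So your two regimes ($\kappa$ large versus range large) do not partition the problem in a way your certificates can handle; you need ``$d$ large'' as a genuinely separate third case, after which your ladder argument can be completed. The gap is repairable, but the repair is the substance of the upper bound, and the proposal defers it.
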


\begin{proof} 
Let us denote the $i^{\mathrm{th}}$ vertex of $P$ by $x_i$ for $i=1,2,\ldots ,n$.
For the lower bound let us suppose that $n=k^2+1$ for some even $k$ (this is possible, since we are only interested in the order of magnitude of $n-m_{nd}(P)$) and let us call the batch of vertices $x_{(i-1)k+1}, x_{(i-1)k+2}, \ldots , x_{ik}$ Batch $i$ for $i=1,2,\ldots ,k$. Now let us color the vertices of Batch $i$ red if 
and only if $i$ is odd and let $x_{k^2+1}$ be blue (just like the vertices of Batch $k$, since $k$ is even). We claim that in order to find a majority vertex, one needs
at least $n-2k-1$ comparisons (that is, one has to verify the result of this many comparisons). Assume to the 
contrary that fewer comparisons suffice.
Then the number of edges not asked as queries is $p\ge 2k$, hence the number of
$q$-components after the last query is $p+1\ge 2k+1$. It is easy to see that the number of balanced $q$-components
is at most $k-1$, since a balanced $q$-component must contain both $x_{ik}$ and $x_{ik+1}$ for some $i<k$. Thus
at least $k+2$ of the  $q$-components are unbalanced. It is also easy to see that the weight of any $q$-component
is at most $k+1$ (since $k$ vertices of the same color are always followed and preceded by $k$ vertices of the
opposite color, except for the first $k$ and last $k+1$ vertices). Now if one could show a majority vertex, the
weight of its $q$-component should be more than the sum of the weights of the other $q$-components, which is 
impossible, since the latter sum is at least $k+1$. 
This finishes the proof of the lower bound.

Next we prove the upper bound. Consider any coloring of the vertices of $P$ and let us denote the number of red (resp.\ blue) vertices among $\{ x_{1}, x_{2}, \ldots , x_{i}\}$
by $R(i)$ (resp.\ $B(i)$) and let us suppose  without loss of generality that $d:=R(n)-B(n) >0$ (notice that $n$ is odd). 
Observe that if $d\ge \sqrt{n}$, then by asking the first $n-\lceil \frac{d}2 \rceil$ edges 
(or any consecutive $n-\lceil \frac{d}2 \rceil$ edges) of $P$, we obtain a $q$-component of weight at least 
$\lceil \frac{d}2 \rceil +1$ and $\lceil \frac{d}2 \rceil -1$ $q$-components of weight (and also cardinality) 1. 
Thus any majority vertex of the large $q$-component is also a majority vertex of the whole graph, and we are done. Therefore, we 
might assume that $d < \sqrt{n}$. 

Let $D(i) := R(i)-B(i)$ (so $d=D(n)$), 
$\Delta:= \max_{i=1}^n |D(i)|$, and let $j$ be the smallest number, such that $|D(j)|=\Delta$. Since $d> 0$, we 
may suppose that $D(j) = \Delta$, otherwise we can reverse the order of the vertices and obtain a situation, where the similarly obtained
$D(j')$ is positive (the value $\Delta$ would be different then, but $d$ remains the same). Now we consider two 
cases, based on the value of $\Delta$. 

Case 1. $\Delta < 2\sqrt{n}$. Since all values $D(i)$ are in the interval $[-\Delta,\Delta]$, by the pigeonhole principle there must be a value $v$, for which $|\{ i : D(i)=v  \}| \ge \frac{n}{4\sqrt{n}+1} > \frac{\sqrt{n}}{5}$.  It is obvious that if $D(a) = D(b)$, then the number of red and blue vertices are the same in the subpath between the vertices $x_{a+1}$ and 
$x_b$. Let the elements of $\{ i : D(i)=v  \}$ be $i_1, i_2, \ldots ,i_r$ and let us query all edges of $P$, except the 
edges $(x_{i_1}, x_{i_1+1}),  (x_{i_2}, x_{i_2+1}), \ldots ,(x_{i_r}, x_{i_r+1})$. In this way we obtain $r$ or $r+1$ $q$-components, of which $r-1$ are balanced, therefore any majority vertex of the largest non-balanced 
$q$-component (which exists, since $n$ is odd) is a majority vertex of the whole graph as well. Since $r 
>\frac{\sqrt{n}}{5}$, we are done with this case.

Case 2. $\Delta \ge 2\sqrt{n}$. Recall that $j$ is the smallest number, such that $D(j)=\Delta$ and $d < \sqrt{n}$, i.e., the number of red vertices is $\Delta$ more than the number of blue vertices by $x_j$, but then the difference drops to $d$ by the end of $P$.
Thus there must exist a smallest number $k$, such that $k>j$ and $D(k) < \sqrt{n}$. Then the subpath $P'$ between the vertices $x_{j+1}$ and $x_k$ contains at least $\Delta-\sqrt{n} \ge \sqrt{n}$ more blue vertices, than red vertices.  Let now $j_1$ be the smallest index, such that the subpath of $P'$ between $x_{j+1}$ and $x_{j_1}$ contains exactly one more of the blue vertices than the red vertices.  Similarly, let $j_2$ be the smallest index, such 
that the subpath of $P'$ between $x_{j_{1}+1}$ and $x_{j_2}$ contains one more of blue vertices than red vertices, and so on.  It is clear that the indices $j_1,j_2, \ldots , j_{\lfloor \sqrt{n}\rfloor}$ are well-defined. Now 
let us query all edges of $P$, except the edges $(x_{j}, x_{j+1}),  (x_{j_1}, x_{j_1+1}), \ldots ,(x_{j_{\lfloor \sqrt{n}\rfloor}}, x_{j_{\lfloor \sqrt{n}\rfloor +1}})$. In this way we obtain $\lfloor \sqrt{n}\rfloor +2$ $q$-components, such that one of them has weight $\Delta$, $\lfloor \sqrt{n}\rfloor $ of them has 
weight 1, and one of them has weight smaller than $\lfloor \sqrt{n}\rfloor$, thus any majority vertex of the 
$q$-component of weight $\Delta$ is also a majority vertex of the whole graph, finishing the proof of Proposition \ref{nondet}.
\end{proof}

\subsection{Optimal graph with few edges}

In this subsection we prove Theorem \ref{minedge} which states that
for every $n$ there is a graph $G$ with $n$ vertices and $n(1+b(n))$ edges such that $m(G)=n-b(n)$.

\begin{proof}[Proof of Theorem \ref{minedge}]
Let $F_k$ be the graph obtained from a path $v_1v_2\dots v_k$ by adding $k$ vertices of degree 1, $u_1,u_2, \ldots, u_k$, to it such that $u_i$ is connected to $v_i$ for each $1 \le i \le k$. Let $k=\lfloor n/2 \rfloor$ and $G$ be the graph we obtain from $F_k$ by adding all the possible edges incident to any of the vertices $v_{k-b(n)+1}, \dots, v_{k}$.
\footnote{There are several non-isomorphic graphs $G'$ with $n(1+b(n))$ edges such that essentially the same proof shows $m(G')=n-b(n)$. 
For example we could take the union of a path on $n$ vertices with the biclique $K_{n-b(n),b(n)}$, but we have found our proof to be easier to present for the above graph $G$.}

We are going to define an algorithm $A_l$ for $l=2^i$. First we describe some properties of the algorithm. It uses the edges of $F_l$ and either gets back a DIFF answer at some point for an edge that connects two monochromatic $q$-components of the same size (which is a power of two), or shows that $F_l$ is monochromatic. Moreover, at any point it uses only the first $j$ vertices of the path $v_1\dots v_j$ and the leaves $u_1,\dots, u_j$ connected to them, for some $j$. Therefore, if there are vertices not appearing in any query, they form a connected graph.

We define algorithm $A_l$ recursively. Algorithm $A_1$ is trivial, it has only one query $u_1v_1$. Assume we have defined Algorithm $A_l$ and we are given $F_{2l}$. The graph $F_{2l}$ consists of two copies of $F_l$ and an additional edge, where the first copy has vertices $v_1,\dots, v_l,u_1,\dots,u_l$, the second copy has the remaining vertices, and the additional edge is $v_lv_{l+1}$. Algorithm $A_{l+1}$ runs algorithm $A_l$ separately for the first, and then for the second copy of $F_l$, and finally asks $(v_l,v_{l+1})$. If for either copy of $F_l$ we get back a DIFF answer at some point for an edge that connects two monochromatic $q$-components of the same size, we are done. Otherwise, both copies of $F_l$ are monochromatic. In this case a DIFF answer to the last query connects two monochromatic $q$-components of the same size, while a SAME answer shows $F_{2l}$ is monochromatic.

The algorithm showing $m(G)\le n-b(n)$ for even $n$ is based on an idea similar to the algorithm showing $m(K_n)\le n-b(n)$: we ask queries such that if the answer is DIFF, we obtain a balanced $q$-component (that we can discard), and otherwise we build larger and larger monochromatic $q$-components, where the number of vertices is a power of 2. In the following algorithm, whenever the answer is DIFF, we stop the current step and continue with the next.

We start Step 1 with the query $(v_{k},u_{k})$. Observe that these two vertices can be considered as the first vertices of a copy of $F_{2^i}$ having additionally the vertices $v_1,\dots, v_{2^{i-1}-1},u_1,\dots, u_{2^{i-1}-1}$, for every $i\le l=\lfloor \log n \rfloor$. Thus we run algorithm $A_l$. If we obtain a monochromatic $q$-component of size $2^l$, that is of the majority $q$-color, and we are done. If we obtain a DIFF answer, we continue with Step 2, which starts with asking $v_{k-1}u_{k-1}$. Observe that any consecutive part of $F_k$ together with $u_k$ and $v_k$ forms a copy of $F_l$ for some $l$. More precisely, there is a copy of $F_l$ on the vertices $v_j,\dots,v_{j+l-2},u_j,\dots,u_{j+l-2},v_{k-1},u_{k-1}$, provided $j+l-2<k-1$. 
We continue with the vertex $v_j$ if $v_{j-1}$ has the largest index among those appearing in any query in Step 1. Similarly to Step 1, we run algorithm $A_l$ for the largest $l$ possible, i.e., the largest $l$ that is a power of 2 and is smaller than $k-j+1$. 

In general, for Step $i$ we take $v_{k-i+1}$, the first vertex $v_j$ not appearing in any query in Step $i-1$, and $2^r-2$ consecutive vertices next to it for the largest $r$ possible without arriving back to $v_{k-i+1}$. Furthermore, we take $u_m$ for every $v_m$ we took. More precisely, we take $v_j,\dots, v_{j+2^r-2},u_j,\dots, u_{j+2^r-2}$, $v_{k-i+1}$ and $u_{k-i+1}$.
This is a copy of $F_{2^r}$, thus we can run Algorithm $A_{2^r}$ on it. If we obtain a monochromatic $q$-component, that is of the majority color, and we are done. Indeed, we took the largest $r$ possible, thus more than half of the vertices not appearing in any query before Step $i$ are of that color. The vertices appearing in earlier queries are balanced, as the $q$-components obtained in earlier steps are balanced.

Observe that if all the steps end with DIFF answers, we had at least $b(n)$ steps before the end of the algorithm, i.e. before every vertex appeared in a query. Indeed, all the components have order that is a power of 2, and $n$ cannot be written as the sum of less than $b(n)$ powers of 2.
After Step $b(n)$, the remaining vertices (if there are any) form a connected graph, thus we can ask a spanning tree of that graph to find a majority vertex there. Altogether the query graph is a forest of at least $b(n)$ $q$-components, thus at most $n-b(n)$ queries were asked.
\end{proof}



\section{Questions}
We collect below the most important questions that remain open.

\begin{itemize}
    \item What is the complexity of computing $m(\underline w)$ and $m(G)$?
    
    \item For which $\underline{w}'=(2w_1,w_3,w_4,\dots,w_k)$ does $m(\underline w')=m(\underline w)-1$ hold?
    
    \item Does $m(T)\ge n-3$ hold for every tree or path on $n$ vertices?
    
    \item What is the least number of edges a graph on $n$ vertices can have if $m(G)=n-b(n)$?
\end{itemize}

\subsubsection*{Acknowledgment}
We would like to thank our anonymous reviewers for several useful suggestions that improved the presentation of our paper.

\end{document}